\numberwithin{equation}{section}
\newtheorem{proposition}{Proposition}[section]
\newtheorem{lemma}[proposition]{Lemma}
\newtheorem{theorem}[proposition]{Theorem}
\newtheorem{corollary}[proposition]{Corollary}
\theoremstyle{definition}
\newtheorem{remark}[proposition]{Remark}
\newtheorem{example}[proposition]{Example}
\title{Equilateral $p$-gons in $\mathbb R^d$ and  deformed spheres and mod $p$ Fadell-Husseini index}
\author{Andrés Angel, Jerson Borja$^1$}
\date{\today}
\thanks{$^1$  The first author acknowledges and thanks the finacial support of the  grant \textit{ P12.160422.004/01- FAPA ANDRES ANGEL} from Vicedecanatura de Investigaciones de la Facultad de Ciencias de la Universidad de
los Andes, Colombia. The second author acknowledges and thanks the financial support of the grant \textit{Proyecto Semilla 2016-2} from Vicedecanatura de Investigaciones de la Facultad de Ciencias de la Universidad de
los Andes, Colombia}
\begin{document}
\maketitle
\begin{abstract}
We introduce the concept of $r$-equilateral $m$-gons. We prove the existence of $r$-equilateral $p$-gons in $\mathbb R^d$ if $r<d$ and the existence of equilateral $p$-gons in the image of continuous injective maps $f:S^d\to \mathbb R^{d+1}$. Our ideas are based mainly in the paper of Y. Soibelman \cite{soibelman}, in which the topological Borsuk number of $\mathbb{R}^2$ is calculated by means of topological methods and the paper of P. Blagojevi\'c and G. Ziegler \cite{blagojevictetrahedra} where Fadell-Husseini index is used for solving a problem related to the topological Borsuk problem for $\mathbb{R}^3$.

\end{abstract}

\section{Introduction}

Let $(X,\rho)$ be a metric space. For a compact subset $K$ of $X$, $b_{(X,\rho)}(K)$ denotes the \textit{Borsuk number} of $K$, that is, the minimal number of parts of $K$ of smaller diameter necessary to partition $K$. The \textit{Borsuk number} of $(X,\rho)$ is defined to be $B(X,\rho)=\max _{K}b_{X,\rho}(K)$. If $\Omega(\rho)$ denotes the set of metrics on $X$ equivalent to $\rho$, then the \textit{topological Borsuk number} of $(X,\rho)$ is defined by $B(X)=\min_{\tau\in\Omega(\rho)}B(X,\tau)$.

The \textit{topological Borsuk problem} for $\mathbb R^d$ is to estimate the \textit{topological Borsuk number} of $\mathbb R^d$, $B(\mathbb{R}^n)$ for the Euclidean metric (see \cite{soibelman}); one wants to know if $B(\mathbb{R}^n)\geq n+1$. Y. Soibelman proved in \cite{soibelman} that $B(\mathbb{R}^2)=3$, but it is not known if $B(\mathbb{R}^3)\geq 4$.

A nonempty subset $S$ of a metric space $(X,\rho)$ is called \textit{equilateral (with respect to $\rho$)}, if there is some positive real number $c$ such that $\rho(x,y)=c$ for all $x,y\in S$, $x\neq y$. In order to prove that $B(\mathbb R^d)\geq m$, it suffices to show that for any metric $\rho$ defined on $\mathbb R^d$ equivalent to the Euclidean metric, there exists some equilateral subset $S\subseteq (\mathbb R^d,\rho)$ of size $m$ (see \cite{soibelman}). The following result is due to C. M Petty (\cite{petty}):

\begin{theorem}\label{petty} Every equilateral set of size 3 in a finite dimensional normed space of dimension at least 3, can be extended to an equilateral set of size 4.
\end{theorem}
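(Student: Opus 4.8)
The plan is to normalize the configuration, recast the extension problem as the nonemptiness of a triple intersection of unit spheres, and then reduce that to a connectivity statement that the hypothesis $d\ge 3$ makes true. Let $(V,\|\cdot\|)$ be the normed space with $\dim V=d\ge 3$, let $\{a_1,a_2,a_3\}$ be the given equilateral set, and rescale so the common distance is $1$. Writing $S_i=\{x:\|x-a_i\|=1\}$ and $B_i$ for the closed unit ball about $a_i$, an extension to size $4$ is exactly a point of $S_1\cap S_2\cap S_3$. I would center the argument on $A:=S_1\cap S_2$. Note $a_3\in A$, since $\|a_3-a_1\|=\|a_3-a_2\|=1$, and that the point reflection $\sigma(x)=a_1+a_2-x$ is an isometry interchanging $a_1$ and $a_2$; hence $\sigma$ preserves $A$, and it acts freely there because its only fixed point $m=(a_1+a_2)/2$ has $\|m-a_1\|=\tfrac12\ne 1$, so $m\notin A$.

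The mechanism for producing the fourth point is an intermediate value argument for $f\colon A\to\mathbb{R}$, $f(x)=\|x-a_3\|$. Here $f(a_3)=0$, while with $u=a_1-a_3$ and $v=a_2-a_3$ (unit vectors with $\|u-v\|=\|a_1-a_2\|=1$) one computes
\[ f(\sigma(a_3))=\|a_1+a_2-2a_3\|=\|u+v\|\ge 2\|u\|-\|u-v\|=1, \]
using $2u=(u+v)+(u-v)$ and the triangle inequality. Thus $f(a_3)\le 1\le f(\sigma(a_3))$, so once $A$ is known to be path-connected, any path in $A$ from $a_3$ to $\sigma(a_3)$ carries a point $x^\ast$ with $f(x^\ast)=1$; then $\|x^\ast-a_1\|=\|x^\ast-a_2\|=\|x^\ast-a_3\|=1$ and $\{a_1,a_2,a_3,x^\ast\}$ is the desired equilateral $4$-set. (If $\|u+v\|=1$ already, then $\sigma(a_3)$ itself is the fourth point.)

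The heart of the proof, and the step I expect to be the main obstacle, is therefore the path-connectedness of $A=S_1\cap S_2$, which is precisely where $d\ge 3$ must enter: in the Euclidean plane $A$ is a two-point set and the statement is false. I would approach this through the convex lens $C=B_1\cap B_2$, a convex body since its interior contains $m$. Its boundary $\partial C$ is homeomorphic to $S^{d-1}$ and splits as $\partial C=U_1\cup U_2$, where $U_i=\partial C\cap S_i=\{x\in S_i:x\in B_j\}$ (with $\{i,j\}=\{1,2\}$) is a spherical cap and $U_1\cap U_2=A$. In radial coordinates on $S_i$ about its center, each $U_i$ becomes a sublevel set $\{\omega\in\Sigma:\|\omega-w\|\le 1\}$ of a convex function on the unit sphere $\Sigma$ of $V$, and the technical crux is to show, from this convexity, that $U_i$ is a cell, i.e. homeomorphic to $D^{d-1}$. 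Granting this, $\partial C\cong S^{d-1}$ is exhibited as two $(d-1)$-disks glued along their common boundary $A$, forcing $A\cong S^{d-2}$; equivalently, a Mayer--Vietoris computation with the contractible pieces $U_1,U_2$ gives $A$ the reduced homology of $S^{d-2}$. In either formulation $A$ is connected exactly when $d-2\ge 1$, that is $d\ge 3$, which closes the argument.
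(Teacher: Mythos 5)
Note first that the paper does not prove this statement at all: Theorem \ref{petty} is quoted from Petty's article \cite{petty}, so there is no internal proof to compare against, and your argument has to be judged on its own merits. Its first half is correct and is, in spirit, the classical reduction: with $A=S_1\cap S_2$, the point reflection $\sigma(x)=a_1+a_2-x$ preserves $A$, $f(a_3)=0$, and $f(\sigma(a_3))=\|u+v\|\geq 2\|u\|-\|u-v\|=1$, so the fourth point exists as soon as $A$ is connected. (Plain connectedness suffices here, since a continuous image of a connected set is an interval; you do not need path-connectedness, which lightens the burden in the key step.)

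The genuine gap is that key step, and the route you sketch for it is not merely incomplete but partly false for general norms. Take $\mathbb{R}^3$ with the $\ell_\infty$ norm, $a_1=0$, $a_2=e_1$. Then $C=[0,1]\times[-1,1]^2$ and each $U_i$ is indeed a disk (five faces of a cube), but $A=S_1\cap S_2=\{x:\ x_1\in[0,1],\ \max(|x_2|,|x_3|)=1\}$ is a full $2$-dimensional cylinder: the two disks overlap in a $2$-dimensional set rather than being ``glued along their common boundary'', and $A\not\cong S^{d-2}$. So the homeomorphism picture you propose is wrong; only your parenthetical Mayer--Vietoris variant can be salvaged (the cylinder does have the homology of $S^1$). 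But that variant still needs two things you do not supply: (i) connectedness (or contractibility) of each cap $U_i$ --- the appeal to ``convexity'' of $\omega\mapsto\|\omega-w\|$ is not an argument, since $\Sigma$ is not convex and sublevel sets of convex functions restricted to spheres need not be cells or even connected in general; and (ii) a justification of Mayer--Vietoris for this closed cover, which is not excisive: already in the Euclidean case the interiors of $U_1,U_2$ relative to $\partial C$ miss $A$, so singular Mayer--Vietoris does not apply directly and one must thicken the caps or pass to \v{C}ech cohomology. Point (i) has a standard two-dimensional fix: for $\omega\in U_1$, cut with a plane through $a_1$, $a_2$, $\omega$ and invoke the monotonicity lemma of Minkowski geometry (on a normed unit circle the distance to a fixed point of the circle is monotone along each of the two arcs joining it to its antipode), which yields a path from $\omega$ to $a_2$ inside $U_1$; then connectedness of $U_1,U_2$ together with $H^1(S^{d-1})=0$ --- this is exactly where $d\geq 3$ enters --- gives connectedness of $A$, and your intermediate value step finishes. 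As written, however, the central connectivity claim is unestablished, and the topological mechanism offered for it is incorrect.
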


Theorem \ref{petty} implies that for any metric $\rho$ on $\mathbb R^3$ induced by a norm we have $B(\mathbb R^2, \rho)\geq 4$. In fact, restrict $\rho$ to a 2-plane $X$ in $\mathbb R^3$ and apply the result of Soibelman that the Borsuk number of $\mathbb R^2$ is 3 to obtain a $\rho$-equilateral set in $X$ of size 3. This equilateral set in $X$ is also an equilateral set of size 3 in $(\mathbb R^3,\rho)$, so by theorem \ref{petty}, it extends to a $\rho$-equilateral set of size 4. Thus, $B(\mathbb R^4, \rho)\geq 4$. This gives us a partial answer to the Borsuk problem for $\mathbb R^3$.

An \textit{$m$-gon} in a metric space $(X,\rho)$ is an $m$-cycle graph $G$ whose vertices are $m$ distinct elements in $X$. If $G$ has vertices $x_0,x_1,\ldots,x_{m-1}$ and edges $\{x_i,x_{i+1}\}$ for $i=0,1,\ldots,m-1$ (where $x_{m}=x_0$), we say that $G$ is an \textit{equilateral $m$-gon} in $(X,\rho)$, or a \textit{equilateral $m$-gon with respect to $\rho$}, if there is some constant $c>0$ such that $\rho(x_i,x_{i+1})=c$ for all $i=0,1,\ldots,m-1$. Given $r$ metrics on $X$, $\rho_1,\ldots,\rho_r$, we say that $G$ is an \textit{$r$-equilateral $m$-gon} (with respect to $\rho_1,\ldots,\rho_r$) if it is equilateral for each metric $\rho_i$.

Let $\mathbb{Z}/m=\langle \omega|\ \omega^m=1\rangle$. Given an $m$-gon $G$ with vertices $x_0,x_1,\ldots,x_{m-1}$ and edges $\{x_i,x_{i+1}\}$ for $i=0,1,\ldots,m-1$ (where $x_{m}=x_0$), $\mathbb{Z}/m$ acts naturally on the set of vertices of $G$ by $\omega\cdot x_i=x_{i+1}$, and this induces an action of $\mathbb{Z}/m$ on the set of edges of $G$: $\omega\cdot \{x_i,x_{i+1}\}=\{\omega\cdot x_{i},\omega\cdot x_{i+1}\}$.

If $K_m$ denotes the complete graph on vertices $x_0,\ldots,x_{m-1}$, then the action of $\mathbb{Z}/m$ on $x_0,x_1,\ldots, x_{m-1}$ induces an action of $\mathbb Z/m$ on the edges of $K_m$. The set of edges of $K_m$ decomposes into a union of disjoint orbits; some of these orbits represent $m$-gons, some other are $n$-gons for some $n<m$ and some other orbits are not even $n$-gons for any $n$ (see figure \ref{figmgons} (a)). If $m=p$ is an odd prime number, then each orbit of $\mathbb{Z}/p$ acting on the set of edges of $K_p$ is actually a $p$-gon, and there are $(p-1)/2$ of them (see figure \ref{figmgons} (b)). If the subscripts of the $x_i$'s are thought as the elements of the finite field $\mathbb{F}_p$, then these $(p-2)/2$ $p$-gons, that we will denote by $C_1,C_2,\ldots, C_{(p-1)/2}$, can be described as follows: $C_t$ has edges $\{x_{0}, x_{t}\}, \{x_{t}, x_{2t}\}, \ldots, \{x_{(p-1)t}, x_{0t}\}$. Our first main result is the following:

\begin{theorem}\label{equilateral}
Let $p$ be an odd prime number, $\rho_1,\rho_2,\ldots,\rho_r$ be $r$  metrics on $\mathbb{R}^d$ that are equivalent to the Euclidean metric of $\mathbb{R}^d$ and fix a sequence $\{t_1,t_2,\ldots,t_r\}$, where $t_i\in\{1,2,\ldots,(p-1)/2\}$. If $d>r$, then there are $p$ distinct points $x_0,x_1\ldots,x_{p-1}$ in $\mathbb{R}^d$ such each $p$-gon $C_{t_i}$ is $\rho_i$-regular, $i=1,2,\ldots,r$.
\end{theorem}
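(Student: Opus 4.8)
The plan is to set up a Configuration--Test--Map scheme and obstruct the test map with the mod $p$ Fadell--Husseini index. Write $W=\{v\in\mathbb R^p:\ \sum_j v_j=0\}$ for the reduced regular representation of $\mathbb Z/p$. The space of tuples $(x_0,\dots,x_{p-1})$ in $(\mathbb R^d)^p$ with $\sum_j x_j=0$ is $W\otimes\mathbb R^d\cong W^{\oplus d}$ (trivial action on $\mathbb R^d$), and I normalize to the unit sphere $S(W^{\oplus d})$, which records configurations up to translation and scaling. Since $W$ has no trivial summand and $p$ is prime, $\mathbb Z/p$ acts \emph{freely} on $S(W^{\oplus d})$ by the shift $\omega\cdot x_i=x_{i+1}$; its real dimension is $d(p-1)-1$.

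For the test map, for each $i$ I record the edge lengths of $C_{t_i}$ in the metric $\rho_i$ and project off the diagonal $\Delta$ of constant vectors:
$$\phi_i(x)=\Big(\rho_i(x_{jt_i},x_{(j+1)t_i})\Big)_{j=0}^{p-1}\bmod \Delta\ \in\ W .$$
A zero of $\Phi=(\phi_1,\dots,\phi_r)\colon S(W^{\oplus d})\to W^{\oplus r}$ is exactly a configuration in which, for every $i$, all $\rho_i$-edges of $C_{t_i}$ are equal. The computation recalled before the statement, that $\omega$ permutes the $p$ edges of $C_{t_i}$ cyclically (by the shift $t_i^{-1}$), shows each $\phi_i$ is $\mathbb Z/p$-equivariant; because $t_i^{-1}$ is a unit, this shifted copy of the reduced regular representation is isomorphic to $W$, so the target is $W^{\oplus r}$ \emph{independently} of the chosen $t_i$. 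Each $\phi_i$ is continuous since $\rho_i$ is, so $\Phi$ is a continuous $\mathbb Z/p$-map.

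Suppose for contradiction that $\Phi$ has no zero. Then $\Phi/\|\Phi\|$ is a $\mathbb Z/p$-map $S(W^{\oplus d})\to S(W^{\oplus r})$. Viewing $W$ as the complex representation $\bigoplus_{\ell=1}^{(p-1)/2}L_\ell$, where $\omega$ acts on $L_\ell$ by $e^{2\pi i\ell/p}$, both spheres are free representation spheres, so their Fadell--Husseini indices in $H^{*}(\mathbb Z/p;\mathbb F_p)=\mathbb F_p[t]\otimes\Lambda(s)$ are the ideals generated by their Euler classes. As $e(L_\ell)=\ell\,t$ is a unit multiple of $t$, one gets $\mathrm{Ind}\,S(W^{\oplus d})=(t^{d(p-1)/2})$ and $\mathrm{Ind}\,S(W^{\oplus r})=(t^{r(p-1)/2})$. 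The equivariant map forces $\mathrm{Ind}\,S(W^{\oplus r})\subseteq \mathrm{Ind}\,S(W^{\oplus d})$, i.e.\ $r(p-1)/2\ge d(p-1)/2$, i.e.\ $r\ge d$, contradicting $d>r$. Hence $\Phi$ vanishes somewhere, yielding points with all $\rho_i$-edges of $C_{t_i}$ equal. Note the index bookkeeping pins down the hypothesis $d>r$ exactly, so this step I expect to be routine once the scheme is in place.

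The main obstacle I anticipate is ensuring the $p$ points are genuinely distinct, as the statement requires. One observation disposes of the worst case: at a zero the common length $c_i$ satisfies $c_i>0$, since $c_i=0$ would make $C_{t_i}$ (a single $p$-cycle through all vertices) collapse every point to the center of mass, forcing $x=0\notin S(W^{\oplus d})$; thus the solution is a non-degenerate $p$-gon for each metric. The remaining difficulty is excluding partial coincidences $x_a=x_b$, and here I would either perturb the $\rho_i$ to generic equivalent metrics, solve, and pass to a limit while checking that the fat diagonal cannot carry the full Euler class, or pass to the deleted configuration space and verify the index estimate survives on it; this is the step that will require the most care.
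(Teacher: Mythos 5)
Your representation--sphere setup is internally correct as far as it goes: the $\mathbb{Z}/p$-action on $S(W^{\oplus d})$ is free, the $t_i^{-1}$-twisted equivariance is handled properly (the twist is also present, though glossed over, in the paper), and the comparison of Euler classes correctly shows that a $\mathbb{Z}/p$-map $S(W^{\oplus d})\to S(W^{\oplus r})$ forces $r\geq d$. But the issue you flag at the end --- distinctness of the $p$ points --- is not a deferrable technicality; it is the actual content of the theorem, and neither of your proposed repairs closes it. A zero of $\Phi$ on $S(W^{\oplus d})$ only gives $c_i>0$, hence distinctness of vertices that are \emph{adjacent} in some $C_{t_i}$; for small $r$ (e.g.\ the constant sequence $t_1=\cdots=t_r=1$ of Corollary \ref{equilateralpgon}) a coincidence such as $x_0=x_2$ is fully consistent with all the edge-length equations, and the coincidence locus is of high dimension, so passing to ``generic'' equivalent metrics does not avoid it, and limits of solutions of perturbed problems can degenerate onto it. What your argument proves is the strictly weaker statement that each $C_{t_i}$ closes up with equal positive $\rho_i$-side lengths, allowing repeated (non-adjacent) vertices.

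The deeper obstruction is to your repair (b). Deleting the fat diagonal from $S(W^{\oplus d})$ yields a space $\mathbb{Z}/p$-homotopy equivalent to the configuration space $F(\mathbb{R}^d,p)$, which is exactly where the paper works from the start --- but your index computation does not ``survive'' the deletion, because monotonicity applied to the inclusion $F(\mathbb{R}^d,p)\simeq S(W^{\oplus d})\setminus\Sigma\hookrightarrow S(W^{\oplus d})$ gives $Ind_{\mathbb{Z}/p,\mathbb{F}_p}S(W^{\oplus d})\subseteq Ind_{\mathbb{Z}/p,\mathbb{F}_p}F(\mathbb{R}^d,p)$; the index only grows, so the non-membership $b^{r(p-1)/2}\notin Ind_{\mathbb{Z}/p,\mathbb{F}_p}S(W^{\oplus d})$ (a degree count) says nothing about $b^{r(p-1)/2}\notin Ind_{\mathbb{Z}/p,\mathbb{F}_p}F(\mathbb{R}^d,p)$. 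The paper's proof of Lemma \ref{lemmaequilateral1} rests precisely on the nontrivial computation of Blagojevi\'c--L\"uck--Ziegler (equation (\ref{indexconfiguration})), namely $Ind_{\mathbb{Z}/p,\mathbb{F}_p}F(\mathbb{R}^d,p)=\langle ab^{(d-1)(p-1)/2},\,b^{(d-1)(p-1)/2+1}\rangle$, whose lowest-degree generator sits in degree $(d-1)(p-1)+1<d(p-1)$; from this one deduces that an equivariant map $F(\mathbb{R}^d,p)\to S((\Delta^r)^{\perp})$ forces $r(p-1)>(d-1)(p-1)$, i.e.\ $r\geq d$, contradicting $d>r$. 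Without this input --- proved or cited --- your scheme cannot reach the stated theorem; with it, your sphere and the easy representation-theoretic index become unnecessary, and the argument is the paper's.
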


As a consequence, if we put $t_1=t_2=\cdots=t_r$ in theorem \ref{equilateral}, then we have that there exists an $r$-equilateral $p$-gon in $\mathbb R^d$ with respect to $\rho_1,\rho_2,\ldots,\rho_r$.

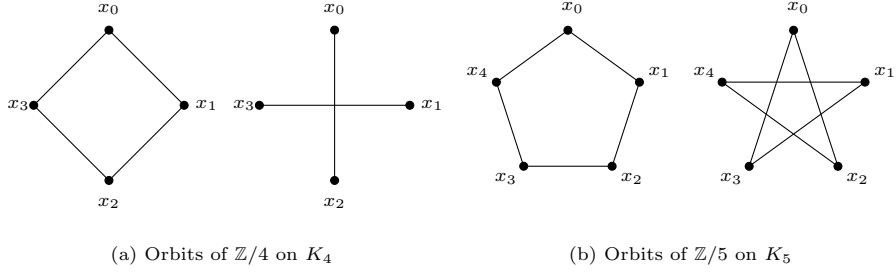
\begin{figure}
\begin{center}
\begin{tikzpicture}
\draw (3.0,4.0)-- (2.0,3.0);
\draw (2.0,3.0)-- (3.0,2.0);
\draw (3.0,2.0)-- (4.0,3.0);
\draw (4.0,3.0)-- (3.0,4.0);
\draw (6.0,4.0)-- (6.0,2.0);
\draw (5.0,3.0)-- (7.0,3.0);
\begin{scriptsize}
\draw [fill=black] (3.0,4.0) circle (1.5pt);
\draw[color=black] (3.,4.28) node {$x_0$};
\draw [fill=black] (2.0,3.0) circle (1.5pt);
\draw[color=black] (1.8,3) node {$x_3$};
\draw [fill=black] (3.0,2.0) circle (1.5pt);
\draw[color=black] (3,1.7) node {$x_2$};
\draw [fill=black] (4.0,3.0) circle (1.5pt);
\draw[color=black] (4.3,3) node {$x_1$};
\draw [fill=black] (6.0,4.0) circle (1.5pt);

\draw[color=black] (6,4.28) node {$x_0$};
\draw [fill=black] (6.0,2.0) circle (1.5pt);
\draw[color=black] (6,1.7) node {$x_2$};
\draw [fill=black] (5.0,3.0) circle (1.5pt);
\draw[color=black] (4.8,3) node {$x_3$};
\draw [fill=black] (7.0,3.0) circle (1.5pt);
\draw[color=black] (7.3,3) node {$x_1$};
\draw[color=black] (4.5, 1) node {(a) Orbits of $\mathbb{Z}/4$ on $K_4$};
\end{scriptsize}
\end{tikzpicture}
\begin{tikzpicture}
\draw (0.0,2.0)-- (-0.9510565162951535,1.3090169943749475);
\draw (-0.9510565162951535,1.3090169943749475)-- (-0.5877852522924732,1-0.8090169943749473);
\draw (-0.5877852522924732,1-0.8090169943749473)-- (0.5877852522924729,1-0.8090169943749476);
\draw (0.5877852522924729,1-0.8090169943749476)-- (0.9510565162951535,1.3090169943749474);
\draw (0.9510565162951535,1.3090169943749474)-- (0.0,2.0);

\draw (3.0,2.0)-- (2.41,0.18999999999999995);
\draw (2.41,0.18999999999999995)-- (3.95,1.31);
\draw (3.95,1.31)-- (2.05,1.31);
\draw (2.05,1.31)-- (3.59,0.18999999999999995);
\draw (3.59,0.18999999999999995)-- (3.0,2.0);

\begin{scriptsize}
\draw [fill=black] (0.95105651629515351,1.3090169943749474) circle (1.5pt);
\draw [fill=black] (-0.9510565162951535,1.3090169943749475) circle (1.5pt);
\draw[color=black] (-1.2,1.443348412702339) node {$x_4$};
\draw [fill=black] (-0.5877852522924732,1-0.8090169943749473) circle (1.5pt);
\draw[color=black] (-0.3-0.524019793796178,0.7-0.6762692998010198) node {$x_3$};
\draw [fill=black] (0.5877852522924729,1-0.8090169943749476) circle (1.5pt);
\draw[color=black] (0.8515788043323498,0.7-0.6762692998010198) node {$x_2$};
\draw [fill=black] (0.0,2.0) circle (1.5pt);
\draw[color=black] (0.06377950526808593,2.3) node {$x_0$};
\draw[color=black] (1.219762089446624,1.4416742063511672) node {$x_1$};

\draw [fill=black] (3.95,1.31) circle (1.5pt);
\draw[color=black] (4.219762089446624,1.4416742063511672) node {$x_1$};
\draw [fill=black] (2.05,1.31) circle (1.5pt);
\draw[color=black] (1.816411978190912,1.443348412702339) node {$x_4$};
\draw [fill=black] (2.41,0.18999999999999995) circle (1.5pt);
\draw[color=black] (2.1709575871503096,0.7-0.6762692998010198) node {$x_3$};
\draw [fill=black] (3.59,0.18999999999999995) circle (1.5pt);
\draw[color=black] (3.8515788043323498,0.7-0.6762692998010198) node {$x_2$};
\draw [fill=black] (3.0,2.0) circle (1.5pt);
\draw[color=black] (3.06377950526808593,2.3) node {$x_0$};

\draw[color=black] (1.5, -1) node {(b) Orbits of $\mathbb{Z}/5$ on $K_5$};
\end{scriptsize}
\end{tikzpicture}
\caption{Orbits of $\mathbb{Z}/m$ acting on $K_m$}
\label{figmgons}
\end{center}
\end{figure}

To state our second main result we first introduce some terminology. Let $p$ be an odd prime and $d\geq 1$. We call the pair $(d,p)$ \textit{admissible} if given any continuous injective map $f:S^{d}\to \mathbb{R}^{d+1}$ and $(p-1)/2$ metrics $\rho_1,\ldots,\rho_{(p-1)/2}$ on $\mathbb{R}^{d+1}$, each metric equivalent to the Euclidean metric of $\mathbb{R}^{d+1}$, then there can be found $p$ different points in the image of $f$, $f(x_0), \ldots,f(x_{p-1})$, such that
\begin{equation}\rho_j(f(x_0), f(x_{j})) =\rho_j(f(x_{j}), f(x_{2j}))=\cdots=\rho_j(f(x_{(p-1)j}), f(x_0)),\end{equation}
for all $j=1,2,\ldots,(p-1)/2$. In other words, the $(p-1)/2$ $p$-gons determined by $f(x_0), \ldots,f(x_{p-1})$, $C_1,C_2,\ldots, C_{(p-1)/2}$ satisfy that each $C_i$ is equilateral with respect to $\rho_i$, $i=1,2,\ldots, (p-1)/2$. Now we can state our result as follows:

\begin{theorem}\label{pgonsdeformed}
If $p\leq 2d+1$ and $(p-1)^2/2$ is not of the form $jd+1$ for any $j=1,2,\ldots,p-1$, then the pair $(d,p)$ is admissible.
\end{theorem}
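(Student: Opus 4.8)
The plan is to phrase admissibility as the existence of a zero of a $\mathbb{Z}/p$-equivariant test map on the configuration space of $S^d$, and to force that zero with the mod $p$ Fadell--Husseini index. Write $G=\mathbb{Z}/p$ and let $X=F(S^d,p)$ be the space of ordered $p$-tuples of pairwise distinct points of $S^d$, with the free $G$-action cyclically permuting the points. Since $f$ is injective, a point of $X$ yields $p$ distinct points $f(x_0),\dots,f(x_{p-1})$ in the image of $f$. For each $j\in\{1,\dots,(p-1)/2\}$ form the vector of the $p$ edge lengths of $C_j$ measured in $\rho_j$,
\[
\big(\rho_j(f(x_{kj}),f(x_{(k+1)j}))\big)_{k\in\mathbb{Z}/p}\in\mathbb{R}[\mathbb{Z}/p],
\]
and project it to the reduced regular representation $W_p=\mathbb{R}[\mathbb{Z}/p]/\langle(1,\dots,1)\rangle$. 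Assembling these gives a $G$-map $\tau\colon X\to W:=W_p^{\oplus(p-1)/2}$ whose zero set is exactly the set of configurations making each $C_j$ be $\rho_j$-equilateral; as the points are distinct and each $\rho_j$ is a metric, the common edge length at such a zero is automatically positive. Hence it suffices to show $\tau$ has a zero.

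Suppose it does not. Radial projection then yields a $G$-map $X\to S(W)$, so by monotonicity of the index $\mathrm{Index}_G\,S(W)\subseteq\mathrm{Index}_G\,X$. Since $G$ acts freely on $S(W)$ one has $\mathrm{Index}_G\,S(W)=(e(W))$, the ideal generated by the mod $p$ Euler class. Decomposing $W_p$ into the planar irreducibles $V_1,\dots,V_{(p-1)/2}$ (rotation by $2\pi k/p$) and using $e(V_k)=ky$ in $H^*(BG;\mathbb{F}_p)=\mathbb{F}_p[y]\otimes\Lambda(x)$, $\deg y=2$, gives $e(W_p)=u\,y^{(p-1)/2}$ with $u\in\mathbb{F}_p^\times$, whence $e(W)=u'\,y^{(p-1)^2/4}$, a unit multiple of $y^{(p-1)^2/4}$ in degree $(p-1)^2/2$. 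Everything thus reduces to the single statement
\[
y^{(p-1)^2/4}\notin\mathrm{Index}_G\,F(S^d,p),
\]
i.e.\ that $y^{(p-1)^2/4}$ has nonzero image in $H^{(p-1)^2/2}_G(F(S^d,p);\mathbb{F}_p)$.

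To prove this I would compute (enough of) the index through the Serre spectral sequence of the Borel fibration $F(S^d,p)\to F(S^d,p)_{hG}\to BG$, with $E_2^{s,t}=H^s(BG;H^t(F(S^d,p);\mathbb{F}_p))$ converging to $H^*(F(S^d,p)/G)$. The required input is $H^*(F(S^d,p);\mathbb{F}_p)$ as a graded $\mathbb{F}_p[G]$-module: it is generated by the degree $d-1$ classes of the configuration space and by the degree $d$ classes $c_0,\dots,c_{p-1}$ restricted from the factors of $(S^d)^p$, with $G$ permuting the $c_i$ cyclically. Splitting each $H^t$ into trivial, free, and intermediate $\mathbb{F}_p[G]$-summands, the free and intermediate pieces contribute only bounded, Bockstein-cancelling cohomology, while the trivial summands produce the $y$-towers; the power $y^{(p-1)^2/4}$ survives precisely if no differential hits it. The decisive differentials are the transgressions attached to the $G$-invariant cohomology in the degrees that are multiples of $d$ (built from $c_0,\dots,c_{p-1}$ after deleting the diagonal), a class of degree $jd$ transgressing into $H^{jd+1}(BG)$. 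The condition $(p-1)^2/2\neq jd+1$ for $1\le j\le p-1$ is exactly what keeps all these targets away from the total degree $(p-1)^2/2$ of our class, and $p\le 2d+1$ (equivalently $(p-1)^2\le 2dp$) keeps that degree inside the range where the bottom $y$-tower is still alive. Granting these, $y^{(p-1)^2/4}$ is a surviving permanent cocycle, it is nonzero in $H^*_G$, and the contradiction proves the theorem. As a sanity check and partial shortcut, the equivariant inclusion $F(\mathbb{R}^d,p)\hookrightarrow F(S^d,p)$ gives $\mathrm{Index}_G F(S^d,p)\subseteq\mathrm{Index}_G F(\mathbb{R}^d,p)$, and the (easier) index of $F(\mathbb{R}^d,p)$ already disposes of the small values of $p$, leaving the genuinely spherical cases—where the classes $c_i$ are indispensable—to the computation above.

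The principal obstacle is precisely this module-and-differential computation. Two features make it delicate. First, the Serre spectral sequence of $F(\mathbb{R}^d,p-1)\to F(S^d,p)\to S^d$ does not collapse over $\mathbb{F}_p$: already the fibration underlying $F(S^2,3)\simeq SO(3)$ has a nonzero (Euler number $2$) transgression, so the $\mathbb{F}_p[G]$-module structure and all its transgressions must be pinned down explicitly rather than read off a product decomposition. Second, one must separate the pure powers $y^k$ from the mixed classes $x\,y^{k}$ carrying the exterior generator; it is the interplay of the Bockstein $y=\beta x$ with the permutation action on the $c_i$ that produces the shift by $1$ in $jd+1$, and hence the arithmetic hypothesis of the theorem. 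I expect the cleanest execution to isolate a closed-form description of $\mathrm{Index}_G\,F(S^d,p)$ as an explicit ideal of $\mathbb{F}_p[y]\otimes\Lambda(x)$, from which both hypotheses $p\le 2d+1$ and $(p-1)^2/2\neq jd+1$ fall out as exactly the conditions for $y^{(p-1)^2/4}$ to lie outside it.
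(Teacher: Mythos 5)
Your reduction is set up correctly and parallels the paper's scheme (configuration space/test map, index of the target sphere generated by a power of the degree-two class, survival argument in the Borel spectral sequence), and your shortcut via the equivariant inclusion $F(\mathbb{R}^d,p)\hookrightarrow F(S^d,p)$ genuinely settles every case $p\le 2d-1$, since there $(p-1)^2/4\le (d-1)(p-1)/2$ and so $b^{(p-1)^2/4}\notin \mathrm{Ind}_{\mathbb{Z}/p,\mathbb{F}_p}F(\mathbb{R}^d,p)=\langle ab^{(d-1)(p-1)/2},b^{(d-1)(p-1)/2+1}\rangle$. But the one remaining case, $p=2d+1$, is exactly where your argument stops being a proof: there $(p-1)/2=d$ and $b^{d(p-1)/2}$ \emph{does} lie in $\mathrm{Ind}_{\mathbb{Z}/p,\mathbb{F}_p}F(\mathbb{R}^d,p)$, so everything rests on the claim $b^{(p-1)^2/4}\notin\mathrm{Ind}_{\mathbb{Z}/p,\mathbb{F}_p}F(S^d,p)$, which you never establish --- your write-up explicitly defers it (``granting these''). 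Moreover the plan you sketch for that computation contains a false step: over $\mathbb{F}_p[\mathbb{Z}/p]$ the ``intermediate'' indecomposables (Jordan blocks of length strictly between $1$ and $p$) do \emph{not} have bounded cohomology; every non-free indecomposable has nonzero group cohomology in all degrees, so such summands of $H^{\ast}(F(S^d,p);\mathbb{F}_p)$ contribute entire rows of the $E_2$-page and cannot be discarded as ``Bockstein-cancelling.'' Note also that by monotonicity applied to $F(S^d,p)\subseteq (S^d)^p\setminus L$, the index of $F(S^d,p)$ is the \emph{largest} of the available indices, so nonmembership in it is the strongest (hence hardest) statement you could have chosen to prove; the theorem does not need it.

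What the paper does, and what your proposal misses, is precisely the device that makes the computation elementary: it never uses $F(S^d,p)$, but the larger space $X_{d,p}=(S^d)^p\setminus L$, where $L$ is only the thin diagonal. Injectivity of $f$ is what licenses this enlargement (in your version injectivity is used only in the trivial direction): if two coordinates of a zero of the test map coincide, then some cyclic distance vector has a zero entry, hence, being constant, is identically zero, which forces all $f(x_i)$ equal and, by injectivity, all $x_i$ equal --- i.e.\ a point of $L$, excluded. So a zero on $X_{d,p}$ automatically has $p$ distinct points. The payoff is that $H^{\ast}(X_{d,p};\mathbb{F}_p)$ is computable as a $\mathbb{Z}/p$-module by Lefschetz duality for the pair $((S^d)^p,L)$: it consists of free modules $N^{\oplus\ast}$ in degrees $d,2d,\dots,(p-2)d$, a single Jordan block $M=\langle x_1,\dots,x_p\rangle/\langle x_1+\cdots+x_p\rangle$ in degree $(p-1)d$, and $\mathbb{F}_p$ in degree $0$. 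Hence only two differentials can reach $E^{r(p-1),0}$ with $r=(p-1)/2$: the transgression $d_{r(p-1)}\colon E^{0,r(p-1)-1}\to E^{r(p-1),0}$, whose source vanishes unless $r(p-1)-1=jd$ (this is where the hypothesis $(p-1)^2/2\neq jd+1$ enters), and $d_{(p-1)d+1}$ out of the row of $M$, whose source sits in column $(r-d)(p-1)-1<0$ when $r\le d$ (this is where $p\le 2d+1$ enters). Your proof can be repaired by substituting $X_{d,p}$ for $F(S^d,p)$ throughout and replacing your positivity remark by the injectivity argument above; as written, however, the central index computation is missing and the sketch offered for it would not succeed.
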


Both results, Theorem \ref{equilateral} and Theorem \ref{pgonsdeformed} are proved by means of topological methods: we use the \textit{configuration space/test map scheme} and the \textit{Fadell-Husseini index with coefficients in $\mathbb F_p$} of the involved spaces to solve our problems.  

\section{Fadell-Husseini index}

Let $G$ be a finite group and $R$ a commutative ring with unit. The Fadell-Husseini index with coefficients in $R$ assigns to each $G$-space $X$ an ideal $Ind_{G,R}X$ of the ring $H^{\ast}(BG;R)$, the cohomology ring of the classifying space $BG$ with coefficients in $R$. The important property of the Fadell-Husseini index is that it gives a necessary condition for the existence of equivariant maps between to $G$-spaces: if $X$ and $Y$ are $G$-spaces and there is some $G$-equivariant map $f:X\to Y$, then $Ind_{G,R}Y\subseteq Ind_{G,R}X$. For definitions, basic properties and some calculations see \cite{blagojevicequivariant, blagojevictetrahedra, blagojevicdihedral, fadellindexvalued, fadellrelative, zivaljevic2}. Here, we want to mention the properties and calculations of the Fadell-Husseini index we will use in the proofs of our results. 

For a finite group $G$ we have the associated universal $G$-bundle $EG\to BG$; if $X$ is a $G$-space $X$, then there is the associated \textit{Borel construction} $X_G=X\times_GEG$ and a map $q_X:X_G\to BG$ induced by the unique map $X\to \ast$. The index $Ind_{G,R}X$ is defined as the kernel of the induced map in cohomology $q_X^{\ast}:H^{\ast}(BG;R)\to H^{\ast}(X_G;R)$. The \textit{axioms of an ideal valued index theory} are satisfied by Fadell-Husseini index:\\

\noindent\textbf{Monotonicity:} If there is some $G$-equivariant map $f:X\to Y$, for $G$-spaces $X$ and $Y$, then 
\[Ind_G Y\subseteq Ind_G X.\]
\textbf{Additivity:} If $\{X=X_1\cup X_2,X_1,X_2\}$ is excisive, where $X_1$ and $X_2$ are $G$-invariant subspaces of $X$, then 
\[Ind_G\ X_1\cdot Ind_G\ X_2\subseteq Ind\ X.\]
\textbf{Continuity:} If $A$ is a closed $G$-invariant subspace of $X$, then there is a $G$-invariant neighborhood $U$ of $A$ in $X$ such that 
\[Ind_G\ A=Ind_G\ U.\]
\textbf{Index theorem:} If $f:X\to Y$ is $G$-equivariant, $B\subseteq Y$ a closed $G$-invariant subspace and $A:=f^{-1}(B)$, then 
\[Ind_G\ A\cdot Ind_G\ (Y\setminus B)\subseteq Ind_G\ X.\]

For instance, if $X$ is a $G$-space and $Y$ an $H$-space, then $X\times Y$ is a $G\times H$-space; the map $q_{X\times Y}:(X\times Y)_{G\times H}\to B(G\times H)$ can be identified with the map $q_X\times q_Y:X_G\times Y_H\to BG\times BH$ and the map $q_{X\times Y}^{\ast}$ can be identified (under K\"unneth isomorphisms) with 
\[q_X^{\ast}\otimes q_Y^{\ast}: H^{\ast}(BG ;\mathbb{K})\otimes H^{\ast}(BH; \mathbb{K})\to H_G^{\ast}(X; \mathbb{K})\otimes H_H^{\ast}(Y; \mathbb{K}),\]
where $\mathbb{K}$ is a field.
The kernel of this map can be expressed as follows:
 
\[\ker q_X^{\ast}\otimes q_Y^{\ast}=\ker q_X^{\ast}\otimes H^{\ast}(BH; \mathbb{K})+H^{\ast}(BG; \mathbb{K})\otimes \ker q_Y^{\ast},\]
that is,
\[Ind_{G\times H,\mathbb{K}} X\times Y=Ind_{G,\mathbb{K}}\otimes H^{\ast}(BH; \mathbb{K})+H^{\ast}(BG; \mathbb{K})\otimes Ind_{H,\mathbb{K}}Y.\]

As particular cases, if $Y=pt$, then 
\[Ind_{G\times H,\mathbb{K}}X=Ind_{G,\mathbb{K}}X\otimes H^{\ast}(BH; \mathbb{K});\]

and if $H^{\ast}(BG; \mathbb{K})\cong \mathbb{K}[x_1,\ldots, x_n], H^{\ast}(BH; \mathbb{K})\cong \mathbb{K}[y_1,\ldots, y_m]$, $Ind_{G,\mathbb{K}}X=\langle f_1,\ldots, f_r\rangle \text{ and } Ind_{H,\mathbb{K}}Y=\langle g_1,\ldots, g_s\rangle,$
then 
\[Ind_{G\times H,\mathbb{K}}X\times Y=\langle f_1,\ldots,f_r,g_1,\ldots, g_s\rangle \subseteq \mathbb{K}[x_1,\ldots,x_n,y_1,\ldots, y_m].\]
 
These results on the index of products can be consulted in \cite{fadellindexvalued}.

\noindent\textbf{Spectral Sequence.} The map $q_X:X_G\to BG$ is a fibration with fiber $X$. There is a  spectral sequence $\{E_r^{\ast,\ast},d_r\}$ with $E_2^{p,q}=H^p(BG;\mathcal{H}^q(X;R))$ where $\mathcal{H}^q(X;R)$ is a system of local coefficients, this local coefficient system is determined by the action of $G=\pi_1(BG)$ on $H^{\ast}(X;R)$. Therefore, the $E_2$-page of the spectral sequence is interpreted as the cohomology of the group $G$ with coefficients in the $G$-module $H^{\ast}(X,R)$ (see for example \cite{adem, blagojevicequivariant}),
\begin{equation}
E_2^{p,q}=H^p(G;H^q(X;R)).
\end{equation}
The map $q_X^{\ast}:H^{\ast}(BG,R)\to H^{\ast}(X_G,R)$ can be represented as the composition (known as the \textit{edge homomorphism})
\begin{equation}
H^{\ast}(BG;R)\to E_2^{\ast,0}\to E_3^{\ast,0}\to E_4^{\ast,0}\to\cdots\to E_{\infty}^{\ast,0}\subseteq H^{\ast}(X_G;R).
\end{equation}\\
 
\noindent\textbf{Spheres, $E_nG$-spaces and the configuration space $F(\mathbb R^d,p)$.} A free $G$-space $X$ is an \textit{$E_nG$-space} if it is an $(n-1)$-connected, $n$-dimensional CW-complex equipped with a $G$-invariant CW-structure. $E_nG$-spaces exist; in fact, the join $G^{\ast(n+1)}$ of $n+1$ copies of the group $G$, where $G$ is regarded as a 0-dimensional simplicial complex, is an $E_nG$-space.

The following properties of the Fadell-Husseini index can be found in \cite{zivaljevic2}.

\begin{proposition}\label{indexzivaljevic} i) For the antipodal $\mathbb Z/2$-action on a sphere $S^n$, we have that   
\[ Ind_{\mathbb Z/2,\mathbb{F}_2}S^n=\langle t^{n+1}\rangle \subseteq \mathbb{F}_2[t].\]
In general, if $X$ is an $E_n\mathbb Z/2$-space, then
\[Ind_{\mathbb Z/2,\mathbb{F}_2}X=\langle t^{n+1}\rangle \subseteq H^{\ast}(B\mathbb Z/2; \mathbb{F}_2)=\mathbb{F}_2[t].\]

 ii) For an odd prime $p$, we have $H^{\ast}(B\mathbb Z/p,\mathbb{F}_p)=\mathbb{F}_p[a,b]/\langle a^2\rangle$, where $\deg(a)=1$ and $\deg(b)=2$. The unit sphere $S^{2n-1}\subseteq \mathbb C^n$ is a $\mathbb Z/p$-space where $\mathbb Z/p$ is seen as the subgroup of $S^1$ of $p$-th roots of 1. $S^1$ acts on $S^{2n-1}$ by complex multiplication and so does $\mathbb Z/p$. Then    
\[Ind_{\mathbb Z/p,\mathbb{F}_p}S^{2n-1}=\langle b^n\rangle \subseteq \mathbb{F}_p[a,b]/\langle a^2\rangle.\] 
If $X$ is an $E_{2n-1}\mathbb Z/p$, then 
\[Ind_{\mathbb Z/p,\mathbb{F}_p}X=\langle b^{n}\rangle \subseteq \mathbb{F}_p[a,b]/\langle a^2\rangle.\]

 iii) For an $E_{2n}\mathbb Z/p$-space $X$, we have that 
\[Ind_{\mathbb Z/p,\mathbb{F}_p}X=\langle ab^{n}\rangle \subseteq \mathbb{F}_p[a,b]/\langle a^2\rangle.\]

 iv) For a finite group $G$ and an $E_nG$-space $X$, we have
\[
Ind_{G,\mathbb{K}}X\subseteq \bigoplus_{d>n}H^{d}(BG;\mathbb{K}),
\]
that is, for every (homogeneous) element $x\in Ind_GX\subseteq H^{\ast}(BG;\mathbb{K})$, $deg(x)>n$.\\

 v) Let $U, V$ be two $G$-representations and $S(U), S(V)$ be the associated unit $G$-spheres. Assume that the vector bundles $U\to U_G\to BG$ and $V\to V_G\to BG$ are orientable over $\mathbb{K}$. If $Ind_{G,\mathbb{K}}S(U)=\langle f\rangle \subseteq H^{\ast}(BG;\mathbb{K})$ and $Ind_{G,\mathbb{K}}S(V)=\langle g\rangle\subseteq H^{\ast}(BG;\mathbb{K})$, then 
\[Ind_{G,\mathbb{K}}S(U\oplus V)=\langle f\cdot g\rangle\subseteq H^{ \ast}(BG;\mathbb{K}).\]

 vi) Let $V=\mathbb C$ be the 1-dimensional complex $(\mathbb Z/p)^k$-representation associated with the vector $(\alpha_1,\ldots, \alpha_n)\in \mathbb{F}_p^n$ (if $\omega_i$ is the generator of the $i^{th}$ copy of $\mathbb Z/p$ in $(\mathbb Z/p)^n$, then the vector $\alpha$ is characterized by the equality $\omega_i\cdot v=\zeta^{\alpha_i}v$ where $\zeta=\large{e}^{2\pi i/p}$, $v\in V$ and $i=1,\ldots,n$). Then 
\[
Ind_{(\mathbb Z/p)^n,\mathbb{F}_p}S(V)=\langle \alpha_1b_1+\cdots+\alpha_nb_n\rangle\subseteq \mathbb{F}_p[b_1,\ldots, b_n]\subseteq H^{\ast}(B(\mathbb Z/p)^n;\mathbb{F}_p).
\]
\end{proposition}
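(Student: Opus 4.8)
The plan is to isolate two engines: a single spectral-sequence computation that settles (i)--(iv) at once, and the Gysin sequence of a sphere bundle that settles (v)--(vi). First I would prove a master statement: for any field $\mathbb{K}$, any finite group $G$ and any $E_mG$-space $X$,
\[
Ind_{G,\mathbb{K}}X=\bigoplus_{d>m}H^{d}(BG;\mathbb{K}).
\]
Part (iv) is precisely the inclusion ``$\subseteq$'' here, and once the right-hand side is described by generators this also yields (i)--(iii). For ``$\subseteq$'' I would use that $X$ is $(m-1)$-connected, so the coefficient system $\mathcal{H}^q(X;\mathbb{K})$ vanishes for $1\le q\le m-1$ and the Serre spectral sequence of $q_X:X_G\to BG$ has nonzero rows only for $q=0$ and $q\ge m$; then no differential can reach the base row $E_\ast^{\ast,0}$ in total degree $\le m$, the edge homomorphism $q_X^{\ast}$ is injective through degree $m$, and every class in $\ker q_X^{\ast}=Ind_{G,\mathbb{K}}X$ has degree $>m$. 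For ``$\supseteq$'' I would use freeness: the projection $X_G=X\times_G EG\to X/G$ has contractible fibre $EG$, hence is a homotopy equivalence, and $X/G$ is an $m$-dimensional complex, so $H^{d}(X_G;\mathbb{K})=0$ for $d>m$ and $q_X^{\ast}$ annihilates every class of degree $>m$.

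Next I would specialize, first checking that the listed spheres are genuinely $E_mG$-spaces (the antipodal $S^n$ is free, $(n-1)$-connected and $n$-dimensional, hence $E_n\mathbb{Z}/2$; the rotation sphere $S^{2n-1}\subset\mathbb{C}^n$ is free, $(2n-2)$-connected and $(2n-1)$-dimensional, hence $E_{2n-1}\mathbb{Z}/p$), and then reading off generators of $\bigoplus_{d>m}H^d(BG;\mathbb{K})$. In $\mathbb{F}_2[t]$ this ideal is $\langle t^{n+1}\rangle$, giving (i); in $\mathbb{F}_p[a,b]/\langle a^2\rangle$ the ideal of elements of degree $\ge 2n$ is $\langle b^n\rangle$, giving (ii). For (iii) I would record that the ideal of elements of degree $\ge 2n+1$ is $\langle ab^n,b^{n+1}\rangle$; since $b^{n+1}\notin\langle ab^n\rangle$, the top odd generator $ab^n$ must be supplemented by $b^{n+1}$, even though the membership $ab^n\in Ind_{\mathbb{Z}/p,\mathbb{F}_p}X$ is what is used downstream.

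For (v) and (vi) I would switch to the Gysin sequence, since these concern precise generators for representation spheres that need not be highly connected. If the bundle $W\to W_G\to BG$ is $\mathbb{K}$-orientable, the Thom isomorphism gives the exact piece
\[
H^{i-k}(BG;\mathbb{K})\xrightarrow{\ \cup\, e(W)\ }H^{i}(BG;\mathbb{K})\xrightarrow{\ q^{\ast}\ }H^{i}(S(W)_G;\mathbb{K}),
\]
with $k=\dim_{\mathbb{R}}W$, so $Ind_{G,\mathbb{K}}S(W)=\ker q^{\ast}=\mathrm{im}(\cup\,e(W))=\langle e(W)\rangle$. Then (v) follows from the Whitney relation $e(U\oplus V)=e(U)e(V)$: identifying $f$ and $g$ with the Euler classes $e(U)$ and $e(V)$ (which the hypotheses $\langle e(U)\rangle=\langle f\rangle$, $\langle e(V)\rangle=\langle g\rangle$ permit), one gets $Ind_{G,\mathbb{K}}S(U\oplus V)=\langle e(U)e(V)\rangle=\langle fg\rangle$. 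For (vi) I would identify the line bundle of $V$ with $L_1^{\otimes\alpha_1}\otimes\cdots\otimes L_n^{\otimes\alpha_n}$, where $L_i$ is the standard line with $c_1(L_i)=b_i$, so that $e(V)=c_1(V)=\sum_i\alpha_i b_i$ and $Ind_{(\mathbb{Z}/p)^n,\mathbb{F}_p}S(V)=\langle\sum_i\alpha_i b_i\rangle$.

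The hard part will not be the formalities but the normalizations: pinning down that the Euler classes agree with the chosen ring generators --- that $e$ of the sign line over $B\mathbb{Z}/2$ is $t$, that $e$ of the standard complex line over $B\mathbb{Z}/p$ is $b$, and that $c_1(L_i)=b_i$ over $B(\mathbb{Z}/p)^n$ --- since this rests on the transgression in the spectral sequence computing $H^{\ast}(BG)$ and on a consistent orientation convention. The other point to watch is the even case (iii), where the ideal of classes of degree $>2n$ is genuinely non-principal, so $ab^n$ alone does not generate $Ind_{\mathbb{Z}/p,\mathbb{F}_p}X$.
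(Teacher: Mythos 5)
There is no proof in the paper to compare against: Proposition 2.1 is presented as a list of known computations, with a pointer to \cite{zivaljevic2} (and to \cite{fadellindexvalued} for the product formulas). Your argument is therefore a self-contained replacement for that citation, and it is correct wherever the printed statement is correct. Your master computation $Ind_{G,\mathbb{K}}X=\bigoplus_{d>m}H^{d}(BG;\mathbb{K})$ for an $E_mG$-space is right, and both directions are the standard ones: the fiber rows $1,\dots,m-1$ vanish, so the only differential into the base row is $d_{m+1}$, whose targets have degree $\geq m+1$, making the edge homomorphism injective in degrees $\leq m$; and freeness gives $X_G\simeq X/G$, a CW complex of dimension $m$, so everything of degree $>m$ dies. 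Note that this proves an equality, strictly stronger than the inclusion asserted in (iv). The Gysin identification $Ind_{G,\mathbb{K}}S(W)=\langle e(W)\rangle$ for a $\mathbb{K}$-orientable representation sphere, plus multiplicativity of the Euler class, disposes of (v) and (vi); the normalization worry you flag ($e=t$, $e=b$, $c_1(L_i)=b_i$ only up to units and sign conventions) is immaterial at the level of ideals, since homogeneous elements generating the same principal ideal may be substituted for one another---exactly the argument you use to get $\langle fg\rangle=\langle e(U)e(V)\rangle$. The only genuine loose end is bookkeeping: an $E_nG$-space must carry a $G$-invariant CW structure (this is what makes $X/G$ an $m$-dimensional CW complex and $X_G\to X/G$ a homotopy equivalence), so you should say explicitly that the antipodal sphere and $S^{2n-1}\subset\mathbb{C}^n$ with the root-of-unity action admit such structures.

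Your remark about (iii) is not a defect of your proof but a genuine erratum in the paper: as printed, (iii) is false. For an $E_{2n}\mathbb{Z}/p$-space, $b^{n+1}$ has degree $2n+2>2n=\dim X/G$, hence lies in the index by your dimension argument, yet $b^{n+1}\notin\langle ab^{n}\rangle$, because every element of that ideal is a multiple of $a$: indeed $(g(b)+ah(b))\cdot ab^{n}=g(b)\,ab^{n}$ since $a^{2}=0$. The correct statement is $Ind_{\mathbb{Z}/p,\mathbb{F}_p}X=\langle ab^{n},b^{n+1}\rangle=\bigoplus_{d>2n}H^{d}(B\mathbb{Z}/p;\mathbb{F}_p)$, which is not principal. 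One point of your closing sentence should be corrected, though: the membership $ab^{n}\in Ind_{\mathbb{Z}/p,\mathbb{F}_p}X$ is in fact never used downstream in this paper---the proofs in Sections 3 and 4 rely only on part (ii) and on the configuration-space index (2.4)---so the misstatement in (iii) propagates nowhere.
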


The \textit{configuration space} of $n$ distinct points in the topological space $X$ is the space 
\begin{equation}
F(X,n)=\{(x_1,x_2,\ldots,x_n)\in X^n: x_i\neq x_j\text{ for all} i\neq j\}.
\end{equation}
The symmetric group $S_n$ acts on $F(X,n)$ by permuting coordinates; the cyclic group $\mathbb Z/n$ seen as the subgroup of $S_n$ generated by the permutation $(12\cdots n)$, acts on $F(X,n)$ by restricting the action of $S_n$. The computation of $Ind_{\mathbb Z/p,\mathbb F_p}F(\mathbb R^d)$, where $p$ is an odd prime number and $d>1$ is made in \cite{blagojevicequivariant} (Theorem 6.1): 

\begin{equation}\label{indexconfiguration}
Ind_{\mathbb Z/p,\mathbb{F}_p}F(\mathbb R^d,p)=\langle ab^{\frac{(d-1)(p-1)}{2}}, b^{\frac{(d-1)(p-1)}{2}+1}\rangle\subseteq \mathbb{F}_p[a,b]/\langle a^2\rangle.
\end{equation}

\section{Existence of $r$-equilateral $p$-gons in $\mathbb R^d$}

Assume that $p$ is an odd prime number, $r\geq 1$, $d\geq 1$, $\rho_1, \rho_2,\ldots,\rho_r$ are $r$ metrics on $\mathbb{R}^d$, each $\rho_i$ equivalent to the Euclidean metric of $\mathbb{R}^d$, and consider a sequence $\{t_1,t_2,\ldots, t_r\}$, where $t_i\in\{1,2,\ldots,(p-1)/2\}$. The problem is to show that there are $p$ distinct points $x_0,x_1\ldots,x_{p-1}$ in $\mathbb{R}^d$ such that $C_{t_i}$ is $\rho_i$-regular for all $i=1,2,\ldots,r$. 

The configuration space for our problem is the configuration space of $p$ distinct points in $\mathbb{R}^d$, $F(\mathbb{R}^d,p)=\{(x_0,x_1,\ldots, x_{p-1})\in(\mathbb{R}^d)^p:x_i\neq x_j, i\neq j\}$. Consider $\mathbb{Z}/p=\langle\omega|\ \omega^p=1\rangle$ acting on $(\mathbb{R}^d)^p$ by 
\begin{equation}\omega\cdot (x_0, x_1,\ldots,x_{p-1})=(x_{1}, x_{2},\ldots,x_{p-1},x_{0}).\end{equation}
The group $\mathbb{Z}/p$ acts freely on $F(\mathbb{R}^d,p)$.

Next, we define $V=\mathbb{R}^p\times \cdots \times \mathbb{R}^p$, the product of $r$ copies of $\mathbb{R}^d$, and define our test map $f=(f_1,f_2,\ldots,f_{r}):F(\mathbb{R}^d,p)\to V$, where  
\begin{equation} f_i(x_0,\ldots,x_{p-1})=(\rho_i(x_0,x_{t_i}), \rho_i(x_{t_i},x_{2t_i}), \ldots, \rho_i(x_{(p-1)t_i},x_0)).\end{equation}
The group $\mathbb{Z}/p$ acts on $\mathbb{R}^p$ by $\omega\cdot(y_0, y_1,\ldots,y_{p-1})=(y_{1}, y_{2},\ldots,y_{p-1},y_{0})$ and $\mathbb{Z}/p$ acts on $V$ diagonally. The map $f:F(\mathbb{R}^d,p)\to V$ becomes $\mathbb{Z}/p$-equivariant.

The existence of $p$ distinct points $x_0,x_1\ldots,x_{p-1}$ in $\mathbb{R}^d$ such that $C_{t_i}$ is $\rho_i$-regular for any $i$ is equivalent to the existence of an element $(x_0,\ldots,x_{p-1})\in F(\mathbb{R}^d,p)$ such that for each $i=1,\ldots,r$, $f_i(x_0,\ldots,x_{p-1})\in \Delta$, where $\Delta=\{(t,t,\ldots,t)\in \mathbb{R}^d:t\in\mathbb{R}\}$; this is equivalent to say that $im\ f\cap \Delta^{r}\neq \varnothing$.

If we assume that $im\ f\cap\Delta^r=\varnothing$, then we get a $\mathbb{Z}/p$-equivariant map $f:F(\mathbb{R}^d,p)\to V\setminus \Delta^r$. Note that the action of $\mathbb{Z}/p$ on $\mathbb{R}^p\setminus \Delta$ is free and so is the action of $\mathbb{Z}/p$ on $V\setminus \Delta^r$. Thus, we have the following:

\begin{lemma}\label{lemmaequilateral}
If there is no $\mathbb Z/p$-equivariant map $F(\mathbb{R}^d,p) \to V\setminus \Delta^r$, then there are $p$ distinct points $x_0,x_1\ldots,x_{p-1}$ in $\mathbb{R}^d$ such that $C_{t_i}$ is $\rho_i$-regular for all $i=1,2,\ldots,r$.
\end{lemma}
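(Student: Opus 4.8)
The plan is to argue by contraposition, assembling the two reductions already carried out just before the statement. I would begin by recalling the reformulation: producing $p$ distinct points $x_0,\ldots,x_{p-1}$ for which each $C_{t_i}$ is $\rho_i$-regular amounts exactly to finding a point $(x_0,\ldots,x_{p-1})\in F(\mathbb{R}^d,p)$ whose image under the test map $f=(f_1,\ldots,f_r)$ meets $\Delta^r$, since $f_i(x_0,\ldots,x_{p-1})\in\Delta$ is precisely the assertion that the $p$ consecutive $\rho_i$-distances along $C_{t_i}$ all coincide. Hence the conclusion we want is equivalent to $im\,f\cap\Delta^r\neq\varnothing$.

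Next I would take the contrapositive of the lemma and instead suppose that no such $p$-tuple exists; by the equivalence just recalled this forces $im\,f\cap\Delta^r=\varnothing$, so $f$ corestricts to a map $F(\mathbb{R}^d,p)\to V\setminus\Delta^r$. It then remains only to check that this corestriction is still $\mathbb{Z}/p$-equivariant and lands in a free $\mathbb{Z}/p$-space. Equivariance of $f$ was built into the construction: the coordinate-shift action on $F(\mathbb{R}^d,p)$ is matched with the diagonal shift action on $V$, and each $f_i$ intertwines them because cyclically relabelling the vertices cyclically permutes the consecutive $\rho_i$-distances along $C_{t_i}$. Since $\Delta^r$ is the full diagonal it is $\mathbb{Z}/p$-invariant, so the complement $V\setminus\Delta^r$ is an invariant subspace and the corestriction is equivariant as well.

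The only point deserving a word is that $\mathbb{Z}/p$ acts \emph{freely} on $V\setminus\Delta^r$: the shift action on $\mathbb{R}^p$ fixes exactly the diagonal $\Delta$, so it is free on $\mathbb{R}^p\setminus\Delta$, and the diagonal action on the $r$-fold product is free off $\Delta^r$. Thus, from the assumption that no admissible $p$-tuple exists, we have manufactured a genuine $\mathbb{Z}/p$-equivariant map $F(\mathbb{R}^d,p)\to V\setminus\Delta^r$, contradicting the hypothesis of the lemma. I do not anticipate a real obstacle here, since the statement is essentially the contrapositive packaging of the configuration-space/test-map reduction set up above; the substantive work lies not in this lemma but in the step that follows it, where one must show — via the Fadell--Husseini index computations of Proposition \ref{indexzivaljevic} together with \eqref{indexconfiguration} — that no such equivariant map can exist when $d>r$.
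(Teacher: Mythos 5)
Your proposal is correct and follows essentially the same route as the paper, which establishes this lemma through the discussion immediately preceding its statement: the reformulation of the geometric condition as $im\,f\cap\Delta^r\neq\varnothing$, followed by the observation that its failure yields a $\mathbb{Z}/p$-equivariant corestriction $F(\mathbb{R}^d,p)\to V\setminus\Delta^r$. Your added checks of equivariance and freeness of the action on $V\setminus\Delta^r$ match the paper's own remarks, so there is nothing to flag.
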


\begin{lemma}\label{lemmaequilateral1}
Let $p$ be an odd prime number, $d\geq 1$ and $r\geq 1$. If $d>r$, then there is no $\mathbb{Z}/p$-equivariant map $F(\mathbb{R}^d,p) \to V\setminus \Delta^r$.
\end{lemma}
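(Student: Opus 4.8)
The plan is to invoke the Monotonicity axiom of the Fadell--Husseini index: to rule out a $\mathbb{Z}/p$-equivariant map $F(\mathbb{R}^d,p)\to V\setminus\Delta^r$ it suffices to exhibit an element of $Ind_{\mathbb{Z}/p,\mathbb{F}_p}(V\setminus\Delta^r)$ that does not lie in $Ind_{\mathbb{Z}/p,\mathbb{F}_p}F(\mathbb{R}^d,p)$. The latter ideal is already recorded in \eqref{indexconfiguration}, so the whole argument reduces to computing the index of the target and comparing the two ideals inside $\mathbb{F}_p[a,b]/\langle a^2\rangle$.

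First I would identify the equivariant homotopy type of $V\setminus\Delta^r$. Writing $\mathbb{R}^p=\Delta\oplus W$ with $W=\{y:\sum_i y_i=0\}$ the standard $(p-1)$-dimensional representation, we get $V=\Delta^r\oplus W^r$ as $\mathbb{Z}/p$-representations, and linearly collapsing the $\Delta^r$-coordinate followed by radial projection gives a $\mathbb{Z}/p$-equivariant deformation retraction of $V\setminus\Delta^r$ onto the unit sphere $S(W^r)$ (the condition for lying in $V\setminus\Delta^r$ is exactly that the $W^r$-component is nonzero, which is preserved throughout). Since the index is invariant under equivariant homotopy equivalence, $Ind_{\mathbb{Z}/p,\mathbb{F}_p}(V\setminus\Delta^r)=Ind_{\mathbb{Z}/p,\mathbb{F}_p}S(W^r)$.

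Next I would compute this index via the product formula. For $p$ odd, $W$ decomposes as the sum of the $(p-1)/2$ nontrivial irreducible two-dimensional real representations; viewing each as a complex line $\mathbb{C}_k$ on which $\omega$ acts by $\zeta^k$, one has $W^r\cong\bigoplus_{k=1}^{(p-1)/2}\mathbb{C}_k^{\oplus r}$. By part (vi) of Proposition \ref{indexzivaljevic} (with $n=1$), $Ind_{\mathbb{Z}/p,\mathbb{F}_p}S(\mathbb{C}_k)=\langle kb\rangle=\langle b\rangle$ since $k$ is a unit modulo $p$. Applying part (v) repeatedly — the complex line bundles are orientable over $\mathbb{F}_p$ — multiplies the generators, so that
\[Ind_{\mathbb{Z}/p,\mathbb{F}_p}S(W^r)=\Big\langle\prod_{k=1}^{(p-1)/2}(kb)^r\Big\rangle=\langle b^{r(p-1)/2}\rangle.\]
Finally I would compare the two ideals: in $\mathbb{F}_p[a,b]/\langle a^2\rangle$ the index of the configuration space is $\langle ab^{(d-1)(p-1)/2},\,b^{(d-1)(p-1)/2+1}\rangle$, and because one generator carries a factor of $a$ while the other is a pure power of $b$, a monomial $b^m$ lies in this ideal if and only if $m\geq (d-1)(p-1)/2+1$. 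Hence $b^{r(p-1)/2}$ fails to lie in $Ind_{\mathbb{Z}/p,\mathbb{F}_p}F(\mathbb{R}^d,p)$ precisely when $r(p-1)/2\leq (d-1)(p-1)/2$, i.e. when $r\leq d-1$, which is exactly the hypothesis $d>r$; the non-containment then contradicts Monotonicity.

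I expect the main obstacle to be the index computation of the target: correctly pinning down the equivariant homotopy type $V\setminus\Delta^r\simeq S(W^r)$ and the real-irreducible decomposition of $W^r$, and then checking that the arithmetic threshold lands on $d>r$ rather than an off-by-one neighbor. The comparison step itself is immediate once the shape of the two generators of $Ind_{\mathbb{Z}/p,\mathbb{F}_p}F(\mathbb{R}^d,p)$ is used, since only the pure $b$-power generator can absorb a pure $b$-power.
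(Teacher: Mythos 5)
Your proposal is correct and takes essentially the same route as the paper: both arguments identify the target (up to equivariant projection/retraction) with the free $\mathbb{Z}/p$-sphere $S((\Delta^r)^{\perp})=S(W^r)$ of dimension $r(p-1)-1$, establish $Ind_{\mathbb{Z}/p,\mathbb{F}_p}S(W^r)=\langle b^{r(p-1)/2}\rangle$, and then use Monotonicity together with \eqref{indexconfiguration} to conclude that an equivariant map would force $r(p-1)>(d-1)(p-1)$, i.e.\ $r\geq d$. The only divergence is the sub-step computing the sphere's index: the paper invokes Proposition \ref{indexzivaljevic}~(ii) directly (a free action on an odd sphere makes it an $E_{r(p-1)-1}\mathbb{Z}/p$-space), whereas you decompose $W^r$ into complex lines $\mathbb{C}_k$ and apply parts (v) and (vi); both computations are valid and give the same generator.
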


Lemma \ref{lemmaequilateral1}, combined with Lemma \ref{lemmaequilateral}, completes the proof of Theorem \ref{equilateral}.

\begin{proof}
Assume that there is a $\mathbb{Z}/p$-equivariant map $h:F(\mathbb{R}^d,p)\to V\setminus \Delta^r$. We also have an $\mathbb{Z}/p$-equivariant map $g:V\setminus \Delta^r\to S((\Delta^r)^{\perp})$ (projecting and normalizing). The composition $g\circ h$ gives a $\mathbb{Z}/p$-equivariant map $F(\mathbb{R}^d,p)\to S((\Delta^r)^{\perp})$. The actions of $\mathbb{Z}/p$ on $F(\mathbb{R}^d,p)$ and $S((\Delta^r)^{\perp})$ are free. 

Since $\Delta^r$ is an $r$-dimensional subspace of $V=(\mathbb{R}^p)^r$, we have that $(\Delta^r)^{\perp}$ is an $r(p-1)$-dimensional subspace of $V$. Thus, $S((\Delta^r)^{\perp})$ is a sphere of dimension $r(p-1)-1$.

We calculate Fadell-Husseini indexes. By Proposition \ref{indexzivaljevic}, $ii)$,

\[Ind_{\mathbb{Z}/p,\mathbb{F}_p}S((\Delta^r)^{\perp})=\langle b^{r(p-1)/2}\rangle\subseteq \mathbb{F}_p[a,b]/\langle a^2\rangle\] 
and by (\ref{indexconfiguration}),
\[Ind_{\mathbb{Z}/p,\mathbb{F}_p}F(\mathbb{R}^d,p)=\langle ab^{(d-1)(p-1)/2}, b^{(d-1)(p-1)/2+1}\rangle\subseteq \mathbb{F}_p[a,b]/\langle a^2\rangle.\]
The existence of a $\mathbb{Z}/p$-equivariant map $F(\mathbb{R}^d,p)\to S((\Delta^r)^{\perp})$ yields the inclusion $Ind_{\mathbb{Z}/p,\mathbb{F}_p}S((\Delta^r)^{\perp})\subseteq Ind_{\mathbb{Z}/p,\mathbb{F}_p}F(\mathbb{R}^d,p)$. From this we deduce that $r(p-1)>(d-1)(p-1)$, that is, $r>d-1$. This ends the proof.
\end{proof}

\begin{remark}
The continuity of the map $f=(f_1,f_2,\ldots,f_r)$ follows from the fact that a metric $\rho:\mathbb{R}^d\times \mathbb{R}^d\to \mathbb{R}$ that induces the usual topology of $\mathbb{R}^d$ is a symmetric and continuous function. Nothing else about the $r$ metrics is used in the proof of theorem \ref{equilateral}. So, in theorem \ref{equilateral} we can take $r$ symmetric continuous maps  $\rho_i:\mathbb{R}^d\times \mathbb{R}^d\to \mathbb{R}$; with this generality, the conclusion of theorem \ref{equilateral} is that for each $i\in\{1,2,\ldots,r\}$ there is some $c_i\in\mathbb{R}$ (not necessarily positive) such that 
\begin{equation}\rho_i(x_0,x_{t_i})= \rho_i(x_{t_i},x_{2t_i})= \ldots= \rho_i(x_{(p-1)t_i},x_0)=c_i.\end{equation}
\end{remark}

Now we list some consequences of Theorem \ref{equilateral}.

\begin{corollary}
Let $p$ be an odd prime number and $\rho_1,\rho_2,\ldots,\rho_{(p-1)/2}$ be $(p-1)/2$ metrics on $\mathbb{R}^d$ that are equivalent to the Euclidean metric of $\mathbb{R}^d$. If we have $d>(p-1)/2$, then there are $p$ distinct points $x_0,x_1,\ldots,x_{p-1}$ in $\mathbb{R}^d$ such that for each $t\in\{1,2,\ldots,(p-1)/2\}$, the $p$-gon $C_t$ is $\rho_t$-regular. 
\end{corollary}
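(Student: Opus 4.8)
The plan is to obtain this statement as a direct specialization of Theorem \ref{equilateral}, so no new topological machinery is required. First I would set $r=(p-1)/2$; since $p$ is an odd prime this is a positive integer, so the hypotheses $r\geq 1$ of Theorem \ref{equilateral} are met. Then I would choose the sequence $\{t_1,t_2,\ldots,t_r\}$ by declaring $t_i=i$ for each $i\in\{1,2,\ldots,(p-1)/2\}$. This is an admissible choice because each $t_i$ lies in $\{1,2,\ldots,(p-1)/2\}$, exactly as the statement of Theorem \ref{equilateral} demands; in fact it is simply the identity sequence running over all available indices.

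Next I would check that the dimension bounds line up. With $r=(p-1)/2$, the hypothesis $d>(p-1)/2$ of the corollary is literally the condition $d>r$ appearing in Theorem \ref{equilateral}. Hence the theorem applies and yields $p$ distinct points $x_0,x_1,\ldots,x_{p-1}\in\mathbb{R}^d$ such that each $p$-gon $C_{t_i}$ is $\rho_i$-regular for $i=1,2,\ldots,(p-1)/2$. Since $t_i=i$, this reads as: $C_i$ is $\rho_i$-regular for every $i$, which upon relabelling $i$ as $t$ is precisely the asserted conclusion that $C_t$ is $\rho_t$-regular for all $t\in\{1,2,\ldots,(p-1)/2\}$.

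The only thing to verify is the bookkeeping: that the identity sequence $t_i=i$ is a valid input and that the inequality $d>r$ translates correctly. Because the corollary is a genuine instance of the theorem rather than a strengthening of it, I do not expect any real obstacle; in particular no separate index computation is needed, as the required inclusion of Fadell--Husseini indices was already established inside the proof of Lemma \ref{lemmaequilateral1}. The substance of the argument therefore lies entirely in Theorem \ref{equilateral}, and this corollary records the natural maximal choice $r=(p-1)/2$ with all the distinct orbit types $C_1,\ldots,C_{(p-1)/2}$ simultaneously controlled.
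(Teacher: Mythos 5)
Your proposal is correct and coincides with the paper's own proof: the corollary is obtained exactly by setting $r=(p-1)/2$ and choosing the identity sequence $t_i=i$ in Theorem \ref{equilateral}, with the hypothesis $d>(p-1)/2$ matching $d>r$. No further argument is needed.
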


\begin{proof}
Put $r=(p-1)/2$ and take the sequence $\{1,2,\ldots,(p-1)/2\}$ in Theorem \ref{equilateral}.
\end{proof}

\begin{corollary}
Let $p$ be an odd prime number and $\rho$ be a metric on $\mathbb{R}^d$ that is equivalent to the Euclidean metric of $\mathbb{R}^d$. If $d>(p-1)/2$, then there are $p$ distinct points $x_0,x_1,\ldots,x_{p-1}$ in $\mathbb{R}^d$ such that for each $t=1,2,\ldots,(p-1)/2$, the $p$-gon $C_t$ is regular with respect to $\rho$.
\end{corollary}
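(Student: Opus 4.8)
The plan is to obtain this corollary as an immediate specialization of the preceding one, in which the $(p-1)/2$ metrics are allowed to be distinct. First I would set $\rho_1=\rho_2=\cdots=\rho_{(p-1)/2}=\rho$. Since $\rho$ is by hypothesis equivalent to the Euclidean metric of $\mathbb{R}^d$, each of the metrics $\rho_t$ so defined is equivalent to the Euclidean metric, so the hypotheses of the previous corollary are met.

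Next I would invoke the previous corollary under the standing assumption $d>(p-1)/2$. It produces $p$ distinct points $x_0,x_1,\ldots,x_{p-1}$ in $\mathbb{R}^d$ with the property that, for each $t\in\{1,2,\ldots,(p-1)/2\}$, the $p$-gon $C_t$ is $\rho_t$-regular. Because $\rho_t=\rho$ for every $t$, the phrase ``$\rho_t$-regular'' is literally ``$\rho$-regular'' in each case, which is precisely the conclusion being sought.

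I do not expect any genuine obstacle here: the statement is a diagonal restriction of the multi-metric result, and all the topological work has already been carried out in the proof of Theorem \ref{equilateral} and transmitted through the first corollary. The only point worth a sentence of verification is that imposing a common metric $\rho$ across all the indices $t$ does not strengthen the hypotheses of the earlier result; it merely instantiates the family $\{\rho_t\}$ at a particular diagonal point, and the dimension bound $d>(p-1)/2$ used there is exactly the one assumed here.

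\begin{proof}
Apply the previous corollary with $\rho_1=\rho_2=\cdots=\rho_{(p-1)/2}=\rho$. Since $\rho$ is equivalent to the Euclidean metric, so is each $\rho_t$, and the hypothesis $d>(p-1)/2$ is satisfied. We obtain $p$ distinct points $x_0,x_1,\ldots,x_{p-1}$ in $\mathbb{R}^d$ such that $C_t$ is $\rho_t$-regular for every $t\in\{1,2,\ldots,(p-1)/2\}$. As $\rho_t=\rho$ for all $t$, each $C_t$ is regular with respect to $\rho$, as claimed.
\end{proof}
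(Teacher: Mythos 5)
Your proposal is correct and amounts to the paper's own argument: the paper specializes Theorem \ref{equilateral} directly (taking $r=(p-1)/2$, $\rho_1=\cdots=\rho_{(p-1)/2}=\rho$, and the sequence $\{1,2,\ldots,(p-1)/2\}$), whereas you perform the same diagonal specialization one step later, through the preceding corollary, which is itself exactly that instance of the theorem. The two routes are interchangeable and equally valid.
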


\begin{proof} In theorem \ref{equilateral} put $r=(p-1)/2$, $\rho_1=\rho_2=\cdots=\rho_{(p-1)/2}$ and take the sequence $\{1,2,\ldots,(p-1)/2\}$.
\end{proof}

\begin{corollary}\label{equilateralpgon}
Let $p$ be an odd prime number and $\rho_1,\rho_2,\ldots,\rho_r$ be $r$ metrics on $\mathbb{R}^d$ that are equivalent to the Euclidean metric of $\mathbb{R}^d$. If $d>r$, then there exists an $r$-equilateral $p$-gon in $\mathbb R^d$ with respect to $\rho_1,\rho_2,\ldots,\rho_r$. 
\end{corollary}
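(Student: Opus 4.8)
The plan is to deduce this directly from Theorem \ref{equilateral} by making all the combinatorial parameters coincide. Recall that an $r$-equilateral $p$-gon is a \emph{single} $p$-cycle on $p$ distinct vertices that happens to be equilateral simultaneously for each of the metrics $\rho_1,\ldots,\rho_r$. In Theorem \ref{equilateral} one is free to choose the sequence $\{t_1,\ldots,t_r\}$ with each $t_i\in\{1,\ldots,(p-1)/2\}$, and the conclusion produces $p$ distinct points for which $C_{t_i}$ is $\rho_i$-regular. The key observation is that if we force $t_1=t_2=\cdots=t_r$, then all the $p$-gons $C_{t_i}$ are literally the same cycle.

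Concretely, first I would fix a common value, say $t_1=t_2=\cdots=t_r=1$ (any element of $\{1,\ldots,(p-1)/2\}$ works equally well), so that the resulting constant sequence is admissible as required by Theorem \ref{equilateral}. Next, since $d>r$ by hypothesis, I would invoke Theorem \ref{equilateral} to obtain $p$ distinct points $x_0,\ldots,x_{p-1}\in\mathbb{R}^d$ such that $C_{t_i}=C_1$ is $\rho_i$-regular for every $i=1,\ldots,r$. Finally, because every $C_{t_i}$ coincides with the one $p$-gon $C_1$, this single $p$-gon on the vertices $x_0,\ldots,x_{p-1}$ is equilateral with respect to each $\rho_i$, which is exactly the definition of an $r$-equilateral $p$-gon.

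Since the reduction is a pure specialization, there is essentially no obstacle of its own: all the genuine content lives in Theorem \ref{equilateral}, whose proof rests on the nonexistence of the $\mathbb{Z}/p$-equivariant map in Lemma \ref{lemmaequilateral1}, and hence on the Fadell-Husseini index computations for $F(\mathbb{R}^d,p)$ and $S((\Delta^r)^{\perp})$. The only point worth double-checking is that collapsing the $t_i$ to a common value leaves the hypotheses of Theorem \ref{equilateral} intact---which it does, since any constant sequence with entries in $\{1,\ldots,(p-1)/2\}$ is permitted---so no separate equivariant-topology argument is needed here.
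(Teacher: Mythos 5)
Your proposal is correct and is exactly the paper's own argument: the paper proves this corollary by taking the constant sequence $\{1,1,\ldots,1\}$ in Theorem \ref{equilateral}, which is precisely your specialization $t_1=\cdots=t_r=1$ making all the $p$-gons $C_{t_i}$ coincide. Nothing further is needed.
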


\begin{proof}
In theorem \ref{equilateral} take the constant sequence $\{1,1,\ldots,1\}$.
\end{proof}

\begin{corollary}
The Borsuk number of $\mathbb{R}^2$ is 3.
\end{corollary}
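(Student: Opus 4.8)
The plan is to prove the two inequalities $B(\mathbb R^2)\le 3$ and $B(\mathbb R^2)\ge 3$ separately; the new content is entirely in the second one, while the first is classical.

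For the upper bound I would invoke the classical planar Borsuk theorem: every compact subset of the Euclidean plane of diameter $\delta$ can be partitioned into three pieces each of diameter strictly smaller than $\delta$. This gives $B(\mathbb R^2,\rho_{\mathrm{eucl}})\le 3$, and since the topological Borsuk number is defined as $B(\mathbb R^2)=\min_{\tau\in\Omega(\rho)}B(\mathbb R^2,\tau)$, a minimum taken over all metrics equivalent to the Euclidean one, I immediately obtain $B(\mathbb R^2)\le B(\mathbb R^2,\rho_{\mathrm{eucl}})\le 3$.

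For the lower bound I would use the reduction recalled in the introduction (following \cite{soibelman}): to establish $B(\mathbb R^2)\ge 3$ it suffices to produce, for every metric $\rho$ on $\mathbb R^2$ equivalent to the Euclidean one, a $\rho$-equilateral subset of size $3$; indeed, such a set $\{x_0,x_1,x_2\}$ has all three mutual $\rho$-distances equal to some $c>0$, so any partition of it into two parts leaves a part of two points whose diameter is still $c$, forcing $b_{(\mathbb R^2,\rho)}(\{x_0,x_1,x_2\})=3$ and hence $B(\mathbb R^2,\rho)\ge 3$. This is exactly where Corollary \ref{equilateralpgon} supplies the required equilateral set: I apply it with the odd prime $p=3$, with $r=1$ metric $\rho_1=\rho$, in dimension $d=2$. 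Since $d=2>1=r$, the corollary produces a $1$-equilateral $p$-gon with respect to $\rho$, that is, three distinct points $x_0,x_1,x_2$ with $\rho(x_0,x_1)=\rho(x_1,x_2)=\rho(x_2,x_0)$. Because $p=3$ the $3$-cycle coincides with the complete graph $K_3$, so these three equal edges exhaust all pairs and $\{x_0,x_1,x_2\}$ is precisely a $\rho$-equilateral set of size $3$. As $\rho$ was an arbitrary metric equivalent to the Euclidean one, we get $B(\mathbb R^2,\rho)\ge 3$ for every such $\rho$, and therefore $B(\mathbb R^2)\ge 3$.

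Combining the two bounds yields $B(\mathbb R^2)=3$. I do not anticipate a genuine obstacle: the topological work is already absorbed into Corollary \ref{equilateralpgon} and, ultimately, into Lemma \ref{lemmaequilateral1} with the index computation for $F(\mathbb R^2,3)$. The only points that require care are the elementary observation that for $p=3$ an equilateral $p$-gon is the same thing as an equilateral $3$-point set, and the appeal to the classical planar Borsuk theorem for the upper estimate.
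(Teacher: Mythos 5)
Your proof is correct and follows essentially the same route as the paper: the paper's one-line proof applies Theorem \ref{equilateral} with $d=2$, $r=1$, $p=3$ (your appeal to Corollary \ref{equilateralpgon} is the identical input, since for $r=1$ the corollary is just the theorem) and notes that an equilateral $3$-gon is the same as an equilateral set of size $3$. The two ingredients you spell out explicitly --- the Soibelman reduction from equilateral sets to the lower bound $B(\mathbb{R}^2,\rho)\geq 3$, and the classical planar Borsuk theorem for the upper bound $B(\mathbb{R}^2)\leq 3$ --- are left implicit in the paper (the reduction is stated in the introduction with a citation to \cite{soibelman}), so your write-up is simply a more complete rendering of the same argument.
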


\begin{proof} In theorem \ref{equilateral} take $d=2$, $r=1$ and $p=3$. Note that an equilateral $3$-gon is the same as an equilateral set of size 3.
\end{proof}

\begin{remark}
Soibelman proves in \cite{soibelman} that the Borsuk number of $\mathbb{R}^2$ is 3 by the same method (the case $d=2, r=1,p=3$). He remarks that for the topological Borsuk problem in $\mathbb{R}^n$, $n>2$, there is an $S_{n+1}$-equivariant map $(\mathbb{R}^n)^{n+1}\to\mathbb{R}^{n(n+1)/2}$. As in the CS/TM scheme shown above, we get an equivariant map between spheres, but the actions of various subgroups of $S_{n+1}$ are not free on the target, so Dold's theorem cannot be applied. Our results are based on the idea of having actual free actions of some groups (i.e. $\mathbb{Z}/p$) on the unit sphere of some convenient spaces on the target. We do not get equilateral sets, but $r$-equilateral $p$-gons.
\end{remark}

\begin{example} \textit{Bi-equilateral triangles do not always exist in $\mathbb{R}^2$.} 
Theorem \ref{equilateral} or Corollary \ref{equilateralpgon} ensures that there exist \textit{bi-equilateral triangles} for any pair of  metrics $\rho_1$ and $\rho_2$ in $\mathbb{R}^d$ whenever $d>2$. We  will exhibit two metrics in $\mathbb{R}^2$ for which there are no bi-equilateral triangles.

Consider the Euclidean norm $\|(x,y) \|_2=\sqrt{x^2+y^2}$ and the norm $\| (x,y)\|_1=|x|+|y|$. By regarding $\mathbb{R}^2$ as the field of complex numbers $\mathbb{Z}$, for the Euclidean norm, we can assume, without lost of generality, that all equilateral triangles have their vertices living in the unit circle $\{z\in\mathbb{Z}: |z|=1\}$. Let $z_0,z_1,z_2$ be the cubic roots of 1: $z_0=1, z_2=\cos (2\pi/3)+i\sin (2\pi/3), z_2=\cos (4\pi/3)+i\sin (4\pi/3)$. Now we can describe all Euclidean equilateral triangles with their vertices in the unit circle. In fact, for $0\leq \theta\ \leq2\pi$, the 3 complex numbers 
$$
\begin{cases} z_0(\theta)=\cos \theta+i\sin \theta,\\
z_1(\theta)=\cos (\theta+2\pi/3)+i\sin (\theta+2\pi/3),\\ 
z_2(\theta)=\cos (\theta+4\pi/3)+i\sin (\theta+4\pi/3)
\end{cases}
$$
are the vertices of an equilateral triangle. It suffices to consider $0\leq \theta\leq 2\pi/3$ for obtaining all equilateral triangles with their vertices in the unit circle.\\
Now we calculate the length of the edges of such triangles using the taxicab norm:
$$
\begin{cases} d_{01}(\theta)&=\|z_1(\theta)-z_0(\theta)\|_1=|\cos (\theta +2\pi/3)-\cos \theta|+|\sin (\theta +2\pi/3)-\sin \theta|,\\
d_{02}(\theta)&=\|z_2(\theta)-z_0(\theta)\|_1=|\cos (\theta +4\pi/3)-\cos \theta|+|\sin (\theta +4\pi/3)-\sin \theta|,\\
d_{12}(\theta)&=\|z_2(\theta)-z_1(\theta)\|_1\\
&=|\cos (\theta +4\pi/3)-\cos (\theta+2\pi/3)|+|\sin (\theta +4\pi/3)-\sin (\theta+2\pi/3)|,
\end{cases}
$$
these expressions reduce to 
$$
\begin{cases} d_{01}(\theta)=\sqrt{3}|\sin (\theta +\pi/3)|+\sqrt{3}|\cos (\theta +\pi/3)|,\\
d_{02}(\theta)=\sqrt{3}|\sin (\theta +2\pi/3)|+\sqrt{3}|\cos (\theta +2\pi/3)|,\\
d_{12}(\theta)=\sqrt{3}|\sin (\theta +\pi)|+\sqrt{3}|\cos (\theta +\pi)|.
\end{cases}
$$
There is no $\theta$ between $0$ and $2\pi/3 $ such that $d_{01}(\theta)=d_{02}(\theta)=d_{12}(\theta)$. In fact, for $0\leq\theta\leq 2\pi/3$, the equality $d_{01}(\theta)=d_{02}(\theta)$ holds only for $\theta=0,\pi/4,\pi/2$. The common values $d_{01}(\theta)=d_{02}(\theta)$ are $\frac{\sqrt{3}}{2}(1+\sqrt{3}), 3\sqrt{2}/2, \frac{\sqrt{3}}{2}(1+\sqrt{3})$ respectively; but, the respective values of $d_{12}(\theta)$ are $\sqrt{3},\sqrt{6}, \sqrt{3}$. This shows that there are no equilateral triangle with respect to the Euclidean metric that is also equilateral with respect to the taxicab metric in $\mathbb{R}^2$. Note that the three triangles found are indeed \textit{isosceles} triangles with respect to the taxicab norm. In fact, a gereral result (see \cite{matschke}) shows that given a circle with two distance functions on it, $d_1$ and $d_2$, where $d_1$ is the restricted distance coming from a smooth embedding of $S^1$ into a Riemannian manifold, and $d_2$ is symmetric and continuous, then there are three points in $S^1$ forming a $d_1$-equilateral triangle and, at the same time, a ``$d_2$-isosceles'' triangle.  
\end{example}

\section{Existence of equilateral $p$-gons on deformed spheres}

Theorem \ref{equilateral} shows the existence of equilateral $p$-gons in $(\mathbb R^d,\rho)$ for $p$ prime if $p<d$; it does not apply to $m$-gons when $m$ is not prime, in particular it does not apply to quadrilaterals. For proving the existence of quadrilaterals, Fadell-Husseini index with integer coefficients is needed. In fact, we have the following result of Blagojevi\'{c} and Ziegler (see \cite{blagojevictetrahedra}):

\begin{theorem}\label{quadrilaterals}
For every injective continuous map $f:S^2\to \mathbb R^3$, there are four distinct points
 $y_0,y_1,y_2,y_3$ in the image of $f$ such that
\[d(y_0,y_1)=d(y_1,y_2)=d(y_2,y_3)=d(y_3,y_0)\]
and 
\[d(y_0,y_2)=d(y_1,y_3).\]
Here, $d$ is the Euclidean metric of $\mathbb R^3$. The result is still true if we change $d$ by any metric $\rho$ equivalent to $d$. 
\end{theorem}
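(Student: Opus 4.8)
The plan is to run the configuration space/test map scheme for the symmetry group of a labeled square and to replace the mod $p$ index by the Fadell--Husseini index with $\mathbb{Z}$ coefficients. Since $f$ is injective and continuous, $\bar\rho(x,y):=\rho(f(x),f(y))$ is a symmetric continuous function on $S^2\times S^2$, and a quadruple as in the statement is exactly the $f$-image of four distinct points $(x_0,x_1,x_2,x_3)$ of $S^2$ with $\bar\rho(x_0,x_1)=\bar\rho(x_1,x_2)=\bar\rho(x_2,x_3)=\bar\rho(x_3,x_0)$ and $\bar\rho(x_0,x_2)=\bar\rho(x_1,x_3)$. I therefore take as configuration space $F(S^2,4)$ and let $D_4$, the dihedral group of order $8$ (the automorphism group of the $4$-cycle $0\,1\,2\,3$), act by permuting coordinates: the rotation $r$ sends $x_i\mapsto x_{i+1}$ and the reflection $\sigma$ interchanges $x_1\leftrightarrow x_3$ while fixing $x_0,x_2$. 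Both conditions are encoded in one equivariant test map $g=(g_e,g_d)\colon F(S^2,4)\to\mathbb{R}^4\oplus\mathbb{R}^2$, where $g_e=(\bar\rho(x_0,x_1),\bar\rho(x_1,x_2),\bar\rho(x_2,x_3),\bar\rho(x_3,x_0))$ records the four edge lengths and $g_d=(\bar\rho(x_0,x_2),\bar\rho(x_1,x_3))$ the two diagonals; a quadrilateral of the desired type exists precisely when the image of $g$ meets $\Delta_4\times\Delta_2$, where $\Delta_k\subseteq\mathbb{R}^k$ denotes the diagonal.

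Assume no such quadrilateral exists. Composing $g$ with the projection onto $(\mathbb{R}^4/\Delta_4)\oplus(\mathbb{R}^2/\Delta_2)$ and retracting radially onto the unit sphere yields a $D_4$-equivariant map $F(S^2,4)\to S(W)$. A short character computation identifies the target: on the four edges $D_4$ acts as on the four vertices of a square, so $\mathbb{R}^4/\Delta_4\cong\tau\oplus\chi$, where $\tau$ is the $2$-dimensional irreducible representation of $D_4$ and $\chi$ is the one-dimensional representation with $r\mapsto-1$, $\sigma\mapsto+1$; on the two diagonals $r$ transposes the coordinates while $\sigma$ fixes them, so $\mathbb{R}^2/\Delta_2\cong\chi$. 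Hence $W\cong\tau\oplus\chi\oplus\chi$ has real dimension $4$ and $S(W)\cong S^3$.

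The decisive feature is that this action on $S(W)$ is not free: since $\chi(r^2)=1$, the element $r^2$ acts trivially on the summand $\chi\oplus\chi$ and thus fixes an entire great subsphere of $S(W)$. This is precisely the obstruction noted in the Soibelman remark above --- Dold's theorem and the free mod $p$ sphere-index formulas do not apply --- and it is the reason one must pass to the integral Fadell--Husseini index, which stays sensitive to obstructions carried by non-free actions and by torsion in $H^{\ast}(BD_4;\mathbb{Z})$. The plan is to compare $Ind_{D_4,\mathbb{Z}}S(W)$ with $Ind_{D_4,\mathbb{Z}}F(S^2,4)$ and to contradict monotonicity. For the sphere, $Ind_{D_4,\mathbb{Z}}S(W)$ is generated in degree $4$ by the integral Euler class $e_{\mathbb{Z}}(W)\in H^4(BD_4;\mathbb{Z})$; because $w_1(W)=w_1(\tau)\neq 0$ the bundle $W\to W_{D_4}\to BD_4$ is non-orientable, so $e_{\mathbb{Z}}(W)$ is a torsion class, which I would assemble from the indices of $S(\tau)$ and $S(\chi)$ by the product rule of Proposition \ref{indexzivaljevic}~(v), with due care to its orientability hypotheses.

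The main obstacle is the computation of $Ind_{D_4,\mathbb{Z}}F(S^2,4)$. Unlike the mod $p$ formula (\ref{indexconfiguration}) for the free $\mathbb{Z}/p$-action on $F(\mathbb{R}^d,p)$, here one must analyse the Leray--Serre spectral sequence of the Borel fibration $F(S^2,4)\hookrightarrow F(S^2,4)_{D_4}\to BD_4$ with $E_2^{p,q}=H^p(D_4;H^q(F(S^2,4);\mathbb{Z}))$, which requires understanding the integral cohomology of $F(S^2,4)$ as a $D_4$-module and evaluating the relevant differentials and edge homomorphism. Granting this, the proof finishes by showing that $e_{\mathbb{Z}}(W)\notin Ind_{D_4,\mathbb{Z}}F(S^2,4)$, so that no $D_4$-equivariant map $F(S^2,4)\to S(W)$ can exist, contradicting the standing assumption and forcing the existence of the required quadrilateral. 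I expect the delicate point to be that this obstruction is genuinely integral: its reduction modulo $2$ should land inside the $\mathbb{F}_2$-index of $F(S^2,4)$, so that the analogous argument over $\mathbb{F}_2$ yields no contradiction --- which is exactly why integer coefficients are indispensable. Finally, since only the symmetry and continuity of $\bar\rho$ are used, the argument is unchanged for any metric $\rho$ on $\mathbb{R}^3$ equivalent to the Euclidean one.
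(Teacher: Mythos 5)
You should first be aware that the paper does \emph{not} prove Theorem \ref{quadrilaterals}: it quotes it from Blagojevi\'c--Ziegler \cite{blagojevictetrahedra}, recording only that the proof needs the Fadell--Husseini index with \emph{integer} coefficients and that mod $2$ coefficients are insufficient. So the comparison here is against that cited proof and against the paper's own parallel scheme in Section 4. Your outline does reconstruct the correct coarse strategy (CS/TM scheme for the dihedral group of order $8$, a test sphere $S(W)\cong S^3$ with non-free action, integral index, contradiction with monotonicity), but as a proof it has genuine gaps.

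The fatal gap is that the entire mathematical content is deferred: showing that the relevant integral class does not lie in the integral index of the configuration space \emph{is} the theorem, and you explicitly ``grant'' it. Everything preceding that step (test map, character count, non-freeness) is routine setup. Compounding this, the space you chose is not the one for which such a computation is known, or even necessarily true. Blagojevi\'c--Ziegler --- and this paper's own scheme (Proposition \ref{admissibleequivariant}) --- work with $(S^2)^4\setminus L$, $L$ the thin diagonal, not with $F(S^2,4)$; injectivity of $f$ is then exactly what upgrades a zero of the test map to a configuration of pairwise distinct points. Since $F(S^2,4)\subset (S^2)^4\setminus L$ equivariantly, monotonicity gives $Ind_{D_4,\mathbb{Z}}\,((S^2)^4\setminus L)\subseteq Ind_{D_4,\mathbb{Z}}\,F(S^2,4)$, so the non-existence statement you need is \emph{strictly stronger} than the one they prove; nothing in your argument establishes it, and the enlarged index could in principle swallow the obstruction. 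There is also a practical reason for their choice: $(S^2)^4\setminus L$ is simply connected with cohomology concentrated in degrees $0,2,4,6$ (computable by Lefschetz duality exactly as in Section 4), which keeps the Borel spectral sequence manageable, whereas $H^{\ast}(F(S^2,4))$ and its fundamental group are considerably messier.

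Your character computation is also wrong, in a way that matters. The dihedral group does not act on the four edges as on the four vertices of a square: your reflection $\sigma$ (fixing $x_0,x_2$) acts on edges as $(e_0e_3)(e_1e_2)$, with \emph{no} fixed edge. Hence $\mathbb{R}^4/\Delta_4\cong\tau\oplus\gamma$ where $\gamma(r)=-1=\gamma(\sigma)$, while $\mathbb{R}^2/\Delta_2\cong\beta$ with $\beta(r)=-1$, $\beta(\sigma)=+1$; thus $W\cong\tau\oplus\gamma\oplus\beta$, not $\tau\oplus\beta\oplus\beta$. Writing $H^1(BD_4;\mathbb{F}_2)$ additively with $\alpha$ the character $r\mapsto1,\ \sigma\mapsto-1$, one gets $w_1(\tau)=\alpha$, $w_1(\gamma)=\alpha+\beta$, $w_1(\beta)=\beta$, so $w_1(W)=0$: the bundle is \emph{orientable}, contrary to your claim. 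This is actually what makes the integral strategy viable --- there is an honest untwisted Euler class $e(W)\in H^4(BD_4;\mathbb{Z})$ and the Gysin sequence gives $Ind_{D_4,\mathbb{Z}}S(W)=\langle e(W)\rangle$ --- whereas for a genuinely non-orientable $W$ no untwisted integral Euler class exists at all, and Proposition \ref{indexzivaljevic}(v) could not be applied summand-by-summand over $\mathbb{Z}$ in any case, since each of $\tau,\gamma,\beta$ is individually non-orientable. The correct form of the endgame (as in \cite{blagojevictetrahedra}) is that $e(W)$ is a torsion class whose mod $2$ reduction lies in the mod $2$ index of the configuration space (so mod $2$ gives no contradiction), but which itself survives integrally; your stated route to this conclusion does not go through as written.
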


From this theorem, the existence of equilateral quadrilaterals follows for any metric $\rho$ in $\mathbb R^3$ equivalent to the Euclidean metric (take $f$ as the inclusion $S^2\hookrightarrow \mathbb R^3$). The proof of this theorem relies on the use of Fadell-Husseini index with integer coefficients (It is shown in \cite{blagojevictetrahedra} that the use of mod. 2 coefficients is not enough).

We generalize this result of Blagojevi\'{c} and Ziegler in Theorem \ref{pgonsdeformed}. The problem is to determine when a pair $(d,p)$ is admissible. Consider a continuous and injective map $f:S^{d}\to\mathbb R^{d+1}$ and metrics $\rho_1,\rho_2,\ldots,\rho_{(p-1)/2}$ equivalent to the Euclidean metric on $\mathbb R^{d+1}$.
Consider the map $F=(f_1,f_2,\ldots,f_r):(S^{d})^p\to (\mathbb{R}^p)^{(p-1)/2}$ where 
$f_j(x_0,x_1,\ldots,x_{p-1})$ equals 
\begin{equation} (\rho_j(f(x_0), f(x_{1})), \rho_j(f(x_{j}), f(x_{2j})),\ldots,\rho_j(f(x_{(p-1)j}), f(x_0)),\end{equation}
for $(x_0,x_1,\ldots,x_{p-1})\in (S^d)^{p}$, $j=1,2,\ldots,(p-1)/2$. Let $L=\{(x,\ldots,x)\in (S^{d})^p:x\in S^{d}\}$ and $\Delta=\{(t,\ldots,t)\in \mathbb{R}^p: t\in\mathbb{R}\}$. Let us restrict $F$ to $X_{d,p}=(S^{d})^p\setminus L$. We assume that $im\ F\cap \Delta^{(p-1)/2}=\varnothing$ so that we can consider $F$ as a map $F:X_{d,p}\to Y_{p}:=(\mathbb{R}^p)^{(p-1)/2}\setminus\Delta^{(p-1)/2}$. The group $\mathbb{Z}/p$ acts on these spaces by permuting coordinates and making the map $F$ equivariant.

The orthogonal projection $Y_p\to \Delta^{(p-1)/2}\setminus \{0\}$ is $\mathbb Z/p$-equivariant, and so is the normalization map $\Delta^{(p-1)/2}\setminus \{0\}\to S(\Delta^{(p-1)/2})^{\perp}$, where $S(\Delta^{(p-1)/2})^{\perp}$ is the unit sphere of $(\Delta^{(p-1)/2})^{\perp}$. By composing, we get an equivariant map $Y_{p}\to S(\Delta^{(p-1)/2})^{\perp}\cong S^{(p-1)^2/1-1}$. 

\begin{proposition}\label{admissibleequivariant}
The pair $(d,p)$ is admissible if there is no $\mathbb{Z}/p$-equivariant map $X_{d,p}\to S(\Delta^{(p-1)/2})^{\perp}$.
\end{proposition}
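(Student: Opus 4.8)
The plan is to run the configuration space / test map argument exactly as in Lemma \ref{lemmaequilateral}, with one extra combinatorial ingredient forced on us because $X_{d,p}=(S^d)^p\setminus L$ removes only the thin diagonal $L$, whereas admissibility demands $p$ \emph{distinct} points. So I would argue by contradiction on the emptiness of $\mathrm{im}\,F\cap\Delta^{(p-1)/2}$ and then upgrade a point of $X_{d,p}$ lying in $F^{-1}(\Delta^{(p-1)/2})$ to a genuine configuration of distinct points.

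First I would fix an arbitrary continuous injective $f:S^d\to\mathbb R^{d+1}$ and arbitrary metrics $\rho_1,\dots,\rho_{(p-1)/2}$ equivalent to the Euclidean metric, and assume $\mathrm{im}\,F\cap\Delta^{(p-1)/2}=\varnothing$. As in the paragraph preceding the statement, this assumption makes $F$ a well-defined $\mathbb Z/p$-equivariant map $X_{d,p}\to Y_p$; composing with the equivariant projection-and-normalization map $Y_p\to S(\Delta^{(p-1)/2})^{\perp}$ produces a $\mathbb Z/p$-equivariant map $X_{d,p}\to S(\Delta^{(p-1)/2})^{\perp}$, contradicting the hypothesis. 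Hence for every such $f$ and every such family of metrics there is a point $(x_0,\dots,x_{p-1})\in X_{d,p}$ with $F(x_0,\dots,x_{p-1})\in\Delta^{(p-1)/2}$; equivalently, each $p$-gon $C_j$ has all of its $\rho_j$-edge-lengths equal to a common value $c_j\ge 0$.

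The substantive step, and the only part beyond the formal scheme, is to show that such a point automatically has distinct coordinates. I would first check that every $c_j>0$: if $c_j=0$ then all edges of $C_j$ have zero $\rho_j$-length, so $f(x_0)=f(x_j)=f(x_{2j})=\cdots$; since $\gcd(j,p)=1$ the indices $0,j,2j,\dots,(p-1)j$ exhaust $\mathbb Z/p$, whence $f(x_0)=\cdots=f(x_{p-1})$ and, $f$ being injective, $(x_0,\dots,x_{p-1})\in L$, contradicting membership in $X_{d,p}$. Then, because the cycles $C_1,\dots,C_{(p-1)/2}$ together use every edge of $K_p$ (the edge joining two indices whose difference is $\delta\in\{1,\dots,p-1\}$ lies in $C_\delta$ when $\delta\le (p-1)/2$ and in $C_{p-\delta}$ otherwise), any two distinct indices $a\ne b$ are joined by an edge of some $C_j$, so $\rho_j(f(x_a),f(x_b))=c_j>0$ forces $f(x_a)\ne f(x_b)$ and hence $x_a\ne x_b$. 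Thus $f(x_0),\dots,f(x_{p-1})$ are $p$ distinct points in the image of $f$ realizing the required equilateralities, which is exactly what admissibility asks for.

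I expect this combinatorial distinctness lemma to be the real content: the topological input is just the standard obstruction packaged in the nonexistence of the equivariant map, while the care lies in ensuring that deleting only $L$, rather than the full fat diagonal of $(S^d)^p$, still yields genuinely distinct points. This works precisely because the $(p-1)/2$ cycles $C_j$ tile the edge set of $K_p$ and because each common length $c_j$ is forced to be strictly positive.
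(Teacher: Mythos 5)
Your proposal is correct and follows essentially the same route as the paper: both deduce $\mathrm{im}\,F\cap\Delta^{(p-1)/2}\neq\varnothing$ from the nonexistence of the equivariant map (via the projection-and-normalization composition), and both establish distinctness from the same two ingredients, namely that every edge of $K_p$ lies in some cycle $C_j$ and that a single zero-length edge forces the whole cycle $C_j$ to collapse, which by injectivity of $f$ would place the point on $L$. The only difference is cosmetic: you first prove each $c_j>0$ and then conclude $x_a\neq x_b$ pairwise, whereas the paper runs the same argument as a direct contradiction starting from a hypothetical coincidence $x_r=x_s$.
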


\begin{proof}
If there is no $\mathbb{Z}/p$-equivariant map $X_{d,p}\to S(\Delta^{(p-1)/2})^{\perp}$, then for the map $F:X_{d,p}=(S^{d})^p\setminus L\to(\mathbb{R}^p)^{(p-1)/2}$ defined above we have $im\ F\cap \Delta^{(p-1)/2}\neq\varnothing$, that is, there exists $(x_0,x_1,\ldots,x_{p-1})\in X_{d,p}$ such that for every $j=1,2,\ldots,(p-1)/2$, 
\[\rho_j(f(x_0), f(x_{j}))= \rho_j(f(x_{j}), f(x_{2j}))=\cdots=\rho_j(f(x_{(p-1)j}), f(x_0)).\]
It only remains to show that $x_0, x_1,\ldots, x_{p-1}$ are distinct. In fact, if $x_{r}=x_{s}$ for some $r, s$, then there are $j\in\{1,2,\ldots, (p-1)/2\}$ and $t\in \mathbb{F}_p$ such that $r=tj$ and $s=(t+1)j$. Then, $\rho_j(f(x_{tj}),f(x_{(t+1)j}))=0$ and 
\[\rho_j(f(x_0), f(x_{j}))= \rho_j(f(x_{j}), f(x_{2j}))=\ldots=\rho_j(f(x_{(p-1)j}), f(x_0))=0,\]
which implies that $f(x_0)=f(x_1)=\cdots=f(x_{p-1})$, and since $f$ is injective, $x_0=x_1=\cdots=x_{p-1}$, which contradicts that $(x_0,x_1,\ldots,x_{p-1})\in X_{d,p}$. Thus, the points $x_0, x_1,\ldots, x_{p-1}\in S^d$ are distinct.
\end{proof}

We are going to consider the following problem, which is more general: to find conditions on a triplet $(d,r,p)$ such that there is no $\mathbb{Z}/p$-equivariant map $X_{d,p}\to S(\Delta^r)^{\perp}$.

\subsection{The index of the sphere $S(\Delta^{r})^{\perp}$}

The unit sphere $S(\Delta^{r})^{\perp}$ is a sphere of dimension $r(p-1)-1$ with a free action of $\mathbb{Z}/p$. Then, by Proposition \ref{indexzivaljevic}, $ii)$, we have 
\begin{equation} Ind_{\mathbb{Z}/p,\mathbb{F}_p}S(\Delta^{r})^{\perp}=\langle b^{r(p-1)/2}\rangle\subseteq \mathbb{F}_p[a,b]/\langle a^2\rangle=H^{\ast}(B\mathbb{Z}/p;\mathbb{F}_p).\end{equation}

We are not going to compute explicitly the index of $X_{d,p}$, but rather we are going to give conditions on $d,r,p$ such that the element $b^{r(p-1)/2}\in H^{\ast}(B\mathbb{Z}/p;\mathbb{F}_p)$ does not belong to the index of $X_{d,p}$. We do this by using the Serre spectral sequence of the fibration $X_{d,p}\to (X_{d,p})_{\mathbb{Z}/p}\to B\mathbb{Z}/p$ to show that the element $b^{r(p-1)/2}\in H^{\ast}(B\mathbb{Z}/p;\mathbb{F}_p)=E_2^{\ast,0}$ survives forever. This means that $b^{r(p-1)/2}$ is not in $Ind_{\mathbb{Z}/p,\mathbb{F}_p}X_{d,p}$.

The $E_2$-page of the Serre spectral sequence of the fibration $X_{d,p}\to (X_{d,p})_{\mathbb{Z}/p}\to B\mathbb{Z}/p$, $E_2^{s,t}=H^s(B\mathbb{Z}/p; H^t(X_{d,p};\mathbb{F}_p))$ has to be understood as the cohomology of the group $\mathbb{Z}/p$ with coefficients in the $\mathbb{Z}/p$-module $H^{\ast}(X_{d,p};\mathbb{F}_p)$.

\subsection{The cohomology $H^{\ast}(X_{d,p};\mathbb{F}_p)$ as a $\mathbb{Z}/p$-module}

We will use Lefschetz duality (see \cite{munkres}, theorem 70.2, page 415):

\begin{theorem}\label{lefschetz}
If $(X,A)$ is a compact triangulated relative homology $n$-manifold which is orientable, then there are isomorphisms 
\[H^k(X\setminus A;G)\cong H_{n-k}(X,A;G),\]
for all $G$.
\end{theorem}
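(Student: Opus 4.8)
The plan is to prove this by the classical Poincar\'e--Lefschetz duality argument, realized through the cap product with a fundamental class and the dual block decomposition of the given triangulation. First I would use orientability to produce a fundamental class. For $x\in X\setminus A$ the homology manifold condition gives $H_n(X,X\setminus\{x\};\mathbb Z)\cong\mathbb Z$, and an orientation is a locally consistent choice of generators; since $X$ is compact these assemble into a single class $\mu\in H_n(X,A;\mathbb Z)$ restricting to the chosen generator at each interior point. This $\mu$ is the relative fundamental class that drives the duality, and the general $G$-coefficient statement will follow from the integral one by the universal coefficient theorem.

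Next I would build the dual block decomposition. Passing to the first barycentric subdivision, each simplex $\sigma$ of the original triangulation has a dual block $D(\sigma)$, and the collection of these blocks gives a cell-type decomposition dual to the simplicial one. The essential homological input is that in a homology $n$-manifold the dual block of a $k$-simplex behaves like an $(n-k)$-cell, so that the dual block chain complex computes $H_\ast(X,A)$ while the original simplicial cochain complex, restricted away from $A$, computes $H^\ast(X\setminus A)$. Cap product with $\mu$ then defines a chain map from the cochains of $X\setminus A$ to the dual block chains of $(X,A)$, sending the dual cochain of $\sigma$ to $D(\sigma)$ with the orientation prescribed by $\mu$; the degree shift $k\mapsto n-k$ is precisely the block dimension count.

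Finally I would show this map is an isomorphism. The cleanest route is to check that $\cap\mu$ is an isomorphism locally, where the duality between a simplex and its dual block is purely combinatorial, and then to globalize by a Mayer--Vietoris/five-lemma induction over the simplices: on each small piece the map is an isomorphism, the Mayer--Vietoris sequences for the simplicial and dual decompositions are compatible under $\cap\mu$, and the five lemma promotes the local isomorphisms to a global one. The main obstacle will be the bookkeeping at $A$: one must verify that the dual blocks of simplices meeting $A$ are arranged so that deleting $A$ on the cohomology side corresponds exactly to passing to the relative pair $(X,A)$ on the homology side, and that orientability makes the local duality isomorphisms mutually compatible so that they glue.

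A cleaner but less elementary alternative avoids dual blocks entirely. Since $U=X\setminus A$ is an open orientable homology $n$-manifold, Poincar\'e duality with compact supports gives $H^k(U;G)\cong H^{BM}_{n-k}(U;G)$ in Borel--Moore homology. Compactness of $X$ together with the localization sequence for the closed subset $A$ identifies the latter with $H_{n-k}(X,A;G)$: comparing the Borel--Moore localization sequence of $(X,A,U)$ with the ordinary long exact sequence of the pair $(X,A)$ and applying the five lemma yields $H^{BM}_{n-k}(X\setminus A;G)\cong H_{n-k}(X,A;G)$. Composing the two isomorphisms gives the statement, with the work now concentrated in justifying the compactly supported duality from triangulability and orientability.
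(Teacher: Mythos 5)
The paper does not prove this statement at all: it is imported verbatim as Theorem 70.2 of Munkres' \emph{Elements of Algebraic Topology} (the citation appears immediately before the statement) and is used as a black box. So the only meaningful comparison is with the cited source, and there your primary plan --- the dual block decomposition, with the homology-manifold hypothesis supplying the key fact that the block dual to a $k$-simplex behaves homologically like an $(n-k)$-cell, and orientability supplying compatible local generators --- is exactly Munkres' combinatorial argument. Note, though, that this argument needs neither cap products nor a Mayer--Vietoris induction: the relevant complexes are isomorphic already at the chain level. (A chart-by-chart five-lemma induction in the style of the usual proof for topological manifolds would in any case be unavailable here, since a homology $n$-manifold need not be locally Euclidean.) Your Borel--Moore alternative is a genuinely different, sheaf-theoretic route; it is also sound for triangulated pairs, at the cost of invoking compactly supported Poincar\'e duality for homology manifolds rather than elementary combinatorics.

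There are, however, two concrete errors to repair. First, the chain-level bookkeeping is backwards: it is false that ``the original simplicial cochain complex, restricted away from $A$, computes $H^{\ast}(X\setminus A)$'' --- cochains vanishing on $A$ compute $H^{\ast}(X,A)$, which differs from $H^{\ast}(X\setminus A)$ in general (for $(X,A)=(D^2,\partial D^2)$ one has $H^2(X,A)\cong\mathbb{Z}$ while $H^2(X\setminus A)=0$). Dually, the dual-block chain complex computes $H_{\ast}(X\setminus A)$, not $H_{\ast}(X,A)$, because the union of the blocks dual to simplices not in $A$ is a deformation retract of $X\setminus A$; this retraction is precisely where the deleted set $A$ is handled, and it is the step your ``bookkeeping at $A$'' worry gestures at. With the roles assigned correctly, the pairing you describe (relative cochains against dual-block chains) proves the companion duality $H_k(X\setminus A;G)\cong H^{n-k}(X,A;G)$; to obtain the isomorphism actually stated, $H^k(X\setminus A;G)\cong H_{n-k}(X,A;G)$, you must pair the block \emph{cochains} of the dual complex with the relative simplicial chains of $(X,A)$. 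Second, deducing the general-$G$ statement from the integral one ``by the universal coefficient theorem'' is not a valid inference: the UCT sequences for $H^k(X\setminus A;G)$ and for $H_{n-k}(X,A;G)$ place Ext and Tor on opposite sides, so an isomorphism of integral groups does not transport across them. What does work --- and what the corrected combinatorial argument gives for free --- is an isomorphism of free chain complexes, which then yields the duality with arbitrary coefficients in one stroke.
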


We apply theorem \ref{lefschetz} to $((S^d)^p, L)$ and $G=\mathbb{Z}/p$ to have 
\begin{equation} H^{\ast}(X_{d,p};\mathbb{F}_p)=H^{\ast}((S^{d})^p\setminus L;\mathbb{F}_p)=H_{pd-\ast}((S^{d})^p, L;\mathbb{F}_p).\end{equation}
To compute the homology $H_{\ast}((S^{d})^p, L;\mathbb{F}_p)$ we are going to use the long exact sequence of the pair $((S^d)^p, L)$. 
Since $L\cong S^{d}$, $H_i(L;\mathbb{F}_p)=\mathbb{F}_p$ if $ i=0,d$ and $H_i(L;\mathbb{F}_p)=0$ otherwise.

An induction argument using the the K\"unneth isomorphism yields that the only nonzero homology groups $H_i((S^{d})^p;\mathbb{F}_p)$ occurs when $i=0,d,2d,\ldots,pd$; besides, $H_{jd}((S^{d})^p;\mathbb{F}_p)$ is the direct sum of all terms of the form $A_1\otimes A_2\otimes \cdots \otimes A_p$, where $j$ of the $A_l$'s are equal to $H_{d}(S^{d};\mathbb{Z}/p)$ and the remaining $A_l$'s are equal to $H_0(S^{d};\mathbb{Z}/p)$. If $N:=(\mathbb{F}_p)^{\oplus(p)}$ with the action of $\mathbb{Z}/p$ given by permuting the $p$ copies of $\mathbb{F}_p$ in the obvious way, then we have
\begin{equation} H_{i}((S^{d})^p;\mathbb{F}_p)\cong
\begin{cases}
\mathbb{F}_p, & i=0,pd,\\
N^{\oplus \frac{1}{p}{p\choose i/d}}, & i=d,2d,\ldots, (p-1)d,\\
0,& \text{otherwise.} 
\end{cases}\end{equation}

The long exact sequence in homology of the pair $((S^{d})^p, L)$ gives isomorphisms
\begin{equation} H_{jd}((S^{d})^p,L;\mathbb{F}_p)\cong H_{jd}((S^{d})^p;\mathbb{F}_p)\end{equation}
for $j=2,3,\ldots,p$, and an exact sequence 
\begin{equation} 0\rightarrow H_{d+1}((S^{d})^p,L)\rightarrow H_{d}(L)\overset{\iota_{\ast}}{\rightarrow}H_{d}((S^{d})^p)\rightarrow H_{d}((S^{d})^p,L)\rightarrow 0,\end{equation}
where $\iota_{\ast}$ is the map induced by the inclusion $L\hookrightarrow (S^{d})^p$. We have that $H_{i}((S^{d})^p,L;\mathbb{F}_p)=0$ in any other case. 
 If $x$ is the generator of $H_{d}(L;\mathbb{F}_p)$ and $x_i$ the generator of $H_{d}((S^{d})^p;\mathbb{F}_p)$ carried by the $i$-th copy of $S^{d}$ in $(S^{d})^p$, then we have that $\iota_{\ast}(x)=x_1+\cdots+x_p$. Thus, $\iota_{\ast}$ is injective, $H_{d+1}((S^{d})^p,L;\mathbb{F}_p)=0$ and
\begin{align*}
H_{d}((S^{d})^p,L;\mathbb{F}_p)& \cong H_{d}((S^{d})^p;\mathbb{F}_p)/im\ \iota_{\ast}\\
                                      &=\langle x_1,\ldots,x_p\rangle/\langle x_1+\cdots+x_p\rangle\\
                                      &=:M.\\
\end{align*}
Thus, we have that 
\begin{equation}
H_i((S^d)^p,L;\mathbb{F}_p)=
\begin{cases}
M,& i=d,\\
N^{\oplus \frac{1}{p}{p\choose i/d}},& i=2d,\ldots, (p-1)d,\\
\mathbb{Z}/p,& i=pd,\\
0,& \text{otherwise.} 
\end{cases}
\end{equation}
and by theorem \ref{lefschetz} and using that ${p\choose i/d}={p\choose p-i/d}$):
\begin{equation}
H^i(X_{d,p};\mathbb{F}_p)=
\begin{cases}
\mathbb{Z}/p,& i=0,\\
N^{\oplus \frac{1}{p}{p\choose i/d}},& i= d,\ldots, (p-2)d,\\
M,& t=(p-1)d,\\
0,& \text{otherwise.} 
\end{cases}
\end{equation}

\subsection{The Serre spectral sequence of the fibration $X_{d,p}\to (X_{d,p})_{\mathbb{Z}/p}\to B\mathbb{Z}/p$}

\noindent The $E_2$-page of the Serre spectral sequence of the fibration $X_{d,p}\to (X_{d,p})_{\mathbb{Z}/p}\to B\mathbb{Z}/p$ is given by:
\begin{equation}
E_2^{s,t}=H^s(\mathbb{Z}/p;H^t(X_{d,p};\mathbb{F}_p)).
\end{equation}
We have that $H^t(X_{d,p};\mathbb{F}_p)\neq 0$ only for $t=0,d,2d,\ldots,(p-1)d$. For $t=0$, $H^0(X_{d,p};\mathbb{F}_p)=\mathbb{F}_p$ so that $E_2^{\ast,0}=H^{\ast}(\mathbb{Z}/p;\mathbb{F}_p);\mathbb{F}_p[a,b]/\langle a^2\rangle$. To compute the cohomology groups $H^{s}(\mathbb{Z}/p;N)$ we consider the \textit{norm map} $\nu: N\to N$ given by multiplication by $1+\omega+\omega^2+\cdots +\omega^{p-1}$ (where $\mathbb Z/p=\langle \omega|\omega^p=1\rangle$) and have that for $s\geq 2$ even $H^{s}(\mathbb{Z}/p;N)=coker\ \overline{\nu}$ and for $s$ odd $H^{s}(\mathbb{Z}/p;N)=ker\ \overline \nu$, where $\overline \nu: N_{\mathbb Z/p}\to N^{\mathbb Z/P}$ is induced by $\nu$ (see \cite{brown}).This map $\overline \nu$ is an isomorphism, so we get the following: 
\begin{equation}
H^{s}(\mathbb{Z}/p;N)=
\begin{cases}
\mathbb{Z}/p, & s=0,\\
0,& s>0. 
\end{cases}
\end{equation}
For $t=d,2d,\ldots,(p-2)d$ we have 
\begin{equation}
E_2^{s,t}=H^s(\mathbb{Z}/p;N^{\oplus\frac{1}{p}{p\choose t/p}})=
\begin{cases}
(\mathbb{Z}/p)^{\frac{1}{p}{p\choose t/p}},& s=0,\\
0,& s>0.
\end{cases}
\end{equation}
We also have $E_2^{s,(p-1)d}=H^s(\mathbb{Z}/p;M)$. Otherwise, $E_2^{s,t}=0$. The $E_2$-term of the spectral sequence is shown in figure \ref{spectralsequence1}.
\begin{figure}
\begin{center}
\definecolor{gris}{rgb}{0.75,0.75,0.75}
\begin{tikzpicture}
\draw [color=gris,dash pattern=on 2pt off 2pt, xstep=1 cm,ystep=1 cm] (-.8,-.8) grid (9.3,7.3);
\draw[color=black] (-.8,0)--(6.2,0);
\draw[->,color=black] (6.8,0 ) -- (9.5,0 );

\draw[color=black] (0,-.8)--(0,3.2);
\draw[color=black] (0,3.8)--(0,6.2);
\draw[->,color=black] (0 ,6.8) -- (0 ,7.5);

\begin{scriptsize}

\draw[color=black] (6.5,0) node {$\cdots$};

\draw[color=black] (.5,-.5) node {$0$};
\draw[color=black] (1.5,-.5) node {$1$};
\draw[color=black] (2.5,-.5) node {$2$};
\draw[color=black] (3.5,-.5) node {$3$};
\draw[color=black] (4.5,-.5) node {$4$};
\draw[color=black] (5.5,-.5) node {$5$};
\draw[color=black] (6.5,-.5) node {$\cdots$};
\draw[color=black] (7.5,-.5) node {$\frac{r(p-1)}{2}$};
\draw[color=black] (8.5,-.5) node {$\cdots$};

\draw[color=black] (-.5,.5) node {$0$};
\draw[color=black] (-.5,1.5) node {$d$};
\draw[color=black] (-.5,2.5) node {$2d$};
\draw[color=black] (-.5,3.5) node {$\vdots$};
\draw[color=black] (-.5,4.5) node {$(\!p\!-\!2\!)d$};
\draw[color=black] (-.5,5.5) node {$(\!p\!-\!1\!)d$};
\draw[color=black] (-.5,6.5) node {$\vdots$};

\draw[color=black] (.5,.5) node {$ \langle 1\rangle$};
\draw[color=black] (1.5,.5) node {$ \langle a\rangle$};
\draw[color=black] (2.5,.5) node {$ \langle b\rangle$};
\draw[color=black] (3.5,.5) node {$ \langle ab\rangle$};
\draw[color=black] (4.5,.5) node {$ \langle b^2\rangle$};
\draw[color=black] (5.5,.5) node {$ \langle ab^2\rangle$};
\draw[color=black] (6.5,.5) node {$ \cdots $};
\draw[color=black] (7.5,.5) node {$ \langle b^{\frac{\!r\!(\!p\!-\!1\!)\!}{2}}\rangle$};
\draw[color=black] (8.5,.5) node {$ \cdots$};

\draw[color=black] (.5,1.5) node {$ N$};
\draw[color=black] (1.5,1.5) node {$ 0$};
\draw[color=black] (2.5,1.5) node {$ 0$};
\draw[color=black] (3.5,1.5) node {$ 0$};
\draw[color=black] (4.5,1.5) node {$ 0$};
\draw[color=black] (5.5,1.5) node {$ 0$};
\draw[color=black] (6.5,1.5) node {$ \cdots$};
\draw[color=black] (7.5,1.5) node {$ 0$};
\draw[color=black] (8.5,1.5) node {$ \cdots$};

\draw[color=black] (.5,2.5) node {$ N^{\frac{p-1}{2}}$};
\draw[color=black] (1.5,2.5) node {$ 0$};
\draw[color=black] (2.5,2.5) node {$ 0$};
\draw[color=black] (3.5,2.5) node {$ 0$};
\draw[color=black] (4.5,2.5) node {$ 0$};
\draw[color=black] (5.5,2.5) node {$ 0$};
\draw[color=black] (6.5,2.5) node {$ \cdots$};
\draw[color=black] (7.5,2.5) node {$0 $};
\draw[color=black] (8.5,2.5) node {$ \cdots$};

\draw[color=black] (.5,3.5) node {$ \vdots$};
\draw[color=black] (1.5,3.5) node {$\vdots $};
\draw[color=black] (2.5,3.5) node {$\vdots $};
\draw[color=black] (3.5,3.5) node {$\vdots $};
\draw[color=black] (4.5,3.5) node {$\vdots $};
\draw[color=black] (5.5,3.5) node {$\vdots $};
\draw[color=black] (6.5,3.5) node {$\cdots $};
\draw[color=black] (7.5,3.5) node {$ \vdots$};
\draw[color=black] (8.5,3.5) node {$ \cdots$};

\draw[color=black] (.5,4.5) node {$ N^{\frac{p-1}{2}}$};
\draw[color=black] (1.5,4.5) node {$ 0$};
\draw[color=black] (2.5,4.5) node {$ 0$};
\draw[color=black] (3.5,4.5) node {$ 0$};
\draw[color=black] (4.5,4.5) node {$ 0$};
\draw[color=black] (5.5,4.5) node {$ 0$};
\draw[color=black] (6.5,4.5) node {$ \cdots$};
\draw[color=black] (7.5,4.5) node {$ 0$};
\draw[color=black] (8.5,4.5) node {$ \cdots$};

\draw[color=black] (.5,5.5) node {$ E_{\!2\!}^{\!0\!,\!(\!p\!-\!1\!)\!d\!}$};
\draw[color=black] (1.5,5.5) node {$  E_{\!2\!}^{\!1\!,\!(\!p\!-\!1\!)\!d\!}$};
\draw[color=black] (2.5,5.5) node {$  E_{\!2\!}^{\!2\!,\!(\!p\!-\!1\!)\!d\!}$};
\draw[color=black] (3.5,5.5) node {$  E_{\!2\!}^{\!3\!,\!(\!p\!-\!1\!)\!d\!}$};
\draw[color=black] (4.5,5.5) node {$  E_{\!2\!}^{\!4\!,\!(\!p\!-\!1\!)\!d\!}$};
\draw[color=black] (5.5,5.5) node {$  E_{\!2\!}^{\!5\!,\!(\!p\!-\!1\!)\!d\!}$};
\draw[color=black] (6.5,5.5) node {$ \cdots$};
\draw[color=black] (7.5,5.5) node {$  E_{\!2\!}^{\!\frac{\!r\!(\!p\!-\!1\!)\!}{2}\!,\!(\!p\!-\!1\!)\!d\!}$};
\draw[color=black] (8.5,5.5) node {$ \cdots$};

\draw[color=black] (.5,6.5) node {$ \vdots$};
\draw[color=black] (1.5,6.5) node {$ \vdots$};
\draw[color=black] (2.5,6.5) node {$ \vdots$};
\draw[color=black] (3.5,6.5) node {$ \vdots$};
\draw[color=black] (4.5,6.5) node {$ \vdots$};
\draw[color=black] (5.5,6.5) node {$ \vdots$};
\draw[color=black] (6.5,6.5) node {$ \cdots$};
\draw[color=black] (7.5,6.5) node {$ \vdots$};
\draw[color=black] (8.5,6.5) node {$ \cdots$};

\end{scriptsize}

\end{tikzpicture}
\caption{$E_2$-page of the Serre spectral sequence}
\label{spectralsequence1}
\end{center}
\end{figure}

The element $b^{r(p-1)/2}$ lives in $E_2^{r(p-1),0}=H^{r(p-1)}(\mathbb{Z}/p;\mathbb{F}_p)$. With our description of $E_2^{\ast,\ast}$ we see that there are only two differentials that might hit elements in $E_2^{r(p-1),0}$, that are $d_{r(p-1)}:E_2^{0, r(p-1)-1}\to E_2^{r(p-1),0}$ and $d_{(p-1)d+1}:E_2^{(r-d)(p-1)-1, d(p-1)}\to E_2^{r(p-1),0}$.

\begin{proposition}
If the two differentials $d_{r(p-1)}:E_2^{0, r(p-1)-1}\to E_2^{r(p-1),0}$ and $d_{(p-1)d+1}:E_2^{(r-d)(p-1)-1, d(p-1)}\to E_2^{r(p-1),0}$ are zero, then $b^{r(p-1)/2}$ does not belong to $Ind_{\mathbb{Z}/p,\mathbb{F}_p}X_{d,p}$, that is, 
$Ind_{\mathbb{Z}/p,\mathbb{F}_p}S(\Delta^{r})^{\perp}\nsubseteq Ind_{\mathbb{Z}/p,\mathbb{F_p}}X_{d,p}.$
In particular, there is no $\mathbb{Z}/p$-equivariant map $X_{d,p}\to S(\Delta^r)^{\perp}$.
\end{proposition}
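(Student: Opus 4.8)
The plan is to prove that, under the stated hypotheses, the class $b^{r(p-1)/2}$ is a permanent cocycle of the Serre spectral sequence which survives with nonzero image to $E_\infty^{r(p-1),0}$, and then to convert this into the asserted non-containment of Fadell--Husseini indices via the Monotonicity axiom. First I would record that $b^{r(p-1)/2}$ is a nonzero element of $E_2^{r(p-1),0}=H^{r(p-1)}(\mathbb{Z}/p;\mathbb{F}_p)$: since $p$ is odd, $r(p-1)$ is even, and $b^{r(p-1)/2}$ spans the one-dimensional even-degree piece of $\mathbb{F}_p[a,b]/\langle a^2\rangle$ in degree $r(p-1)$. Because this class sits in the bottom row $t=0$, every outgoing differential $d_\ell\colon E_\ell^{r(p-1),0}\to E_\ell^{r(p-1)+\ell,\,1-\ell}$ lands in a group with negative second index and therefore vanishes for $\ell\geq 2$; thus $b^{r(p-1)/2}$ is automatically a permanent cocycle, and it survives to $E_\infty$ precisely when it is never a coboundary, i.e. never lies in the image of an incoming differential $d_\ell\colon E_\ell^{r(p-1)-\ell,\,\ell-1}\to E_\ell^{r(p-1),0}$.

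Next I would invoke the description of the $E_2$-page established above: $H^t(X_{d,p};\mathbb{F}_p)$ is nonzero only for $t\in\{0,d,2d,\ldots,(p-1)d\}$, the rows $t=d,2d,\ldots,(p-2)d$ are concentrated in the column $s=0$, and only the top row $t=(p-1)d$ contributes $H^s(\mathbb{Z}/p;M)$ across several columns. As noted just before the statement, this pins down the potentially nonzero incoming differentials at $(r(p-1),0)$ to the two maps $d_{r(p-1)}\colon E^{0,\,r(p-1)-1}\to E^{r(p-1),0}$ (a column-$0$ source, which is nonzero only when $r(p-1)-1$ is a multiple of $d$) and $d_{(p-1)d+1}\colon E^{(r-d)(p-1)-1,\,(p-1)d}\to E^{r(p-1),0}$ (the source coming from the $M$-row); on every other page the source group is zero for dimension reasons and the incoming differential vanishes trivially. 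Since no differential enters or leaves the spot $(r(p-1),0)$ on the intermediate pages, the subquotient there is unchanged up to each of the two relevant pages, so the groups $E_\ell$ appearing as the two named sources and as the target genuinely coincide with the $E_2$-entries in the statement. The hypothesis that both $d_{r(p-1)}$ and $d_{(p-1)d+1}$ are zero therefore guarantees that $b^{r(p-1)/2}$ is never hit, so its class persists to $E_\infty^{r(p-1),0}$ and is nonzero there.

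Finally I would run the surviving class through the edge homomorphism. By the interpretation of $q_{X_{d,p}}^{\ast}$ recalled in Section 2, the map $q_{X_{d,p}}^{\ast}\colon H^{\ast}(B\mathbb{Z}/p;\mathbb{F}_p)\to H^{\ast}((X_{d,p})_{\mathbb{Z}/p};\mathbb{F}_p)$ factors as $H^{\ast}(B\mathbb{Z}/p;\mathbb{F}_p)\twoheadrightarrow E_\infty^{\ast,0}\hookrightarrow H^{\ast}((X_{d,p})_{\mathbb{Z}/p};\mathbb{F}_p)$, and under it $b^{r(p-1)/2}$ maps to its nonzero surviving class. Hence $q_{X_{d,p}}^{\ast}(b^{r(p-1)/2})\neq 0$, i.e. $b^{r(p-1)/2}\notin\ker q_{X_{d,p}}^{\ast}=Ind_{\mathbb{Z}/p,\mathbb{F}_p}X_{d,p}$. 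Since $Ind_{\mathbb{Z}/p,\mathbb{F}_p}S(\Delta^{r})^{\perp}=\langle b^{r(p-1)/2}\rangle$, this gives $Ind_{\mathbb{Z}/p,\mathbb{F}_p}S(\Delta^{r})^{\perp}\nsubseteq Ind_{\mathbb{Z}/p,\mathbb{F}_p}X_{d,p}$. A $\mathbb{Z}/p$-equivariant map $X_{d,p}\to S(\Delta^{r})^{\perp}$ would, by Monotonicity, force the reverse inclusion $Ind_{\mathbb{Z}/p,\mathbb{F}_p}S(\Delta^{r})^{\perp}\subseteq Ind_{\mathbb{Z}/p,\mathbb{F}_p}X_{d,p}$; this contradiction shows that no such map exists.

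The genuine content of the argument is the bookkeeping that isolates exactly these two incoming differentials from the shape of the $E_2$-page — though that is already carried out in the paragraph preceding the statement, so within the proof it may simply be cited. Given that identification, the survival argument and the translation into indices are formal; the only point demanding mild care is the observation that, because nothing touches $(r(p-1),0)$ on the intermediate pages, vanishing of the two named differentials really does prevent $b^{r(p-1)/2}$ from becoming a coboundary at any stage, so that its $E_\infty$-class is nonzero.
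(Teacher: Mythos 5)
Your proof is correct and follows exactly the route the paper takes (the paper leaves this proposition without a formal proof, but its surrounding text contains precisely your argument): the bottom-row position of $b^{r(p-1)/2}$ kills all outgoing differentials, the shape of the $E_2$-page leaves only the two named incoming differentials, their vanishing gives survival to $E_\infty^{r(p-1),0}$, and the edge-homomorphism description of $q_{X_{d,p}}^{\ast}$ plus Monotonicity convert this into the index non-containment and the nonexistence of the equivariant map. Your added care about the entries at the source and target positions being unchanged on intermediate pages is a correct and worthwhile sharpening of what the paper asserts only implicitly.
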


If $r\leq d$, then $E_2^{(2r-d)(p-1)-1, d(p-1)}=0$. On the other hand, $E_2^{0, r(p-1)-1}\neq 0$ implies that $r(p-1)-1=jd$ for some $j=1,2,\ldots, p-1$. The result is that if $r\leq d$ and $r(p-1)$ is not of the form $jd+1$ for any $j=1,2,\ldots,p-1$, then the two differentials above are zero. This implies that $b^{r(p-1)/2}$ is not in the index $Ind_{\mathbb{Z}/p,\mathbb{Z}/p}X_{d,p}$. Thus we have:

\begin{proposition}\label{triplets}
If $r\leq d$ and $r(p-1)$ is not of the form $jd+1$ for any $j=1,2,\ldots,p-1$, then there is no $\mathbb{Z}/p$-equivariant map $X_{d,p}\to S(\Delta^r)^{\perp}$.
\end{proposition}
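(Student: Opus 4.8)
The plan is to reduce the statement entirely to the preceding proposition, which already guarantees that $b^{r(p-1)/2}$ survives to $E_\infty$ — and hence escapes $Ind_{\mathbb{Z}/p,\mathbb{F}_p}X_{d,p}$ — as soon as the two differentials $d_{r(p-1)}\colon E_2^{0,r(p-1)-1}\to E_2^{r(p-1),0}$ and $d_{(p-1)d+1}\colon E_2^{(r-d)(p-1)-1,\,d(p-1)}\to E_2^{r(p-1),0}$ both vanish. So the whole task is to verify, under the two numerical hypotheses $r\le d$ and $r(p-1)\neq jd+1$, that the sources of these differentials are zero. First I would recall why these are the only differentials able to reach the target entry $E_2^{r(p-1),0}$: a differential $d_k$ landing at $(r(p-1),0)$ must emanate from bidegree $(r(p-1)-k,\,k-1)$, and since the rows of the $E_2$-page are concentrated in $t\in\{0,d,2d,\dots,(p-1)d\}$, the source row $k-1$ must be a positive multiple of $d$. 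Moreover the intermediate rows $t=d,\dots,(p-2)d$ carry the coefficient module $N$, for which $H^s(\mathbb{Z}/p;N)=0$ whenever $s>0$, so those rows are supported only in column $s=0$. This confines the incoming differentials to the one emanating from column $0$ and the one emanating from the top nonzero row $t=(p-1)d$.

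Next I would dispatch the second differential. Its source sits in bidegree $((r-d)(p-1)-1,\,d(p-1))$, whose horizontal coordinate is $(r-d)(p-1)-1$. Under the hypothesis $r\le d$ we have $(r-d)(p-1)\le 0$, hence $(r-d)(p-1)-1<0$, so the source group lies in a negative column and is therefore zero. Consequently $d_{(p-1)d+1}$ is automatically the zero map.

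Then I would handle the first differential. Its source $E_2^{0,r(p-1)-1}=H^0\!\big(\mathbb{Z}/p;H^{r(p-1)-1}(X_{d,p};\mathbb{F}_p)\big)$ is nonzero exactly when $r(p-1)-1$ is one of the degrees $0,d,2d,\dots,(p-1)d$ at which $H^{\ast}(X_{d,p};\mathbb{F}_p)$ is supported. Since $p\ge 3$ forces $r(p-1)\ge 2$, the value $r(p-1)-1=0$ is impossible, so a nonzero source would require $r(p-1)-1=jd$, that is $r(p-1)=jd+1$, for some $j\in\{1,\dots,p-1\}$. The standing hypothesis rules this out precisely, so the source vanishes and $d_{r(p-1)}$ is zero as well.

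Finally, with both differentials zero, the preceding proposition yields $b^{r(p-1)/2}\notin Ind_{\mathbb{Z}/p,\mathbb{F}_p}X_{d,p}$, whereas $Ind_{\mathbb{Z}/p,\mathbb{F}_p}S(\Delta^r)^{\perp}=\langle b^{r(p-1)/2}\rangle$; hence $Ind_{\mathbb{Z}/p,\mathbb{F}_p}S(\Delta^r)^{\perp}\nsubseteq Ind_{\mathbb{Z}/p,\mathbb{F}_p}X_{d,p}$, and the monotonicity axiom of the Fadell--Husseini index then forbids any $\mathbb{Z}/p$-equivariant map $X_{d,p}\to S(\Delta^r)^{\perp}$. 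I do not expect a genuine obstacle here: the argument is pure bidegree bookkeeping on the already-computed $E_2$-page. The one point that deserves care is the claim that these two differentials are the sole candidates reaching $E_2^{r(p-1),0}$, and this is exactly where the vanishing $H^{>0}(\mathbb{Z}/p;N)=0$ (the norm map being an isomorphism) does the real work, by collapsing all of the intermediate rows onto the single column $s=0$.
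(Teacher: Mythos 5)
Your proposal is correct and follows essentially the same argument as the paper: under $r\le d$ the source $E_2^{(r-d)(p-1)-1,\,d(p-1)}$ of the second differential lies in a negative column, and the hypothesis $r(p-1)\neq jd+1$ forces the source $E_2^{0,\,r(p-1)-1}$ of the first differential to vanish, so the preceding proposition applies. Your added justification that these are the only two candidate differentials (via $H^{>0}(\mathbb{Z}/p;N)=0$ collapsing the intermediate rows to column $s=0$) is exactly the bookkeeping the paper leaves implicit.
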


Theorem \ref{pgonsdeformed} follows from proposition \ref{admissibleequivariant} and proposition \ref{triplets} by taking $r=(p-1)/2$. A direct consequence of Theorem \ref{pgonsdeformed} is:

\begin{corollary}\label{corpgons}
If $p\leq 2d+1$ and $d$ is even, then $(d,p)$ is admissible.
\end{corollary}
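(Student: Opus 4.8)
The plan is to derive this corollary directly from Theorem \ref{pgonsdeformed}, which already gives admissibility of $(d,p)$ under the two hypotheses $p\leq 2d+1$ and ``$(p-1)^2/2$ is not of the form $jd+1$ for any $j=1,\ldots,p-1$.'' Since the first hypothesis is assumed outright in the corollary, the whole argument reduces to verifying the second hypothesis under the extra assumption that $d$ is even. So the only thing I need to check is the purely arithmetic statement that $(p-1)^2/2\neq jd+1$ for every $j\in\{1,2,\ldots,p-1\}$.

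First I would pin down the parity of $(p-1)^2/2$. Because $p$ is an odd prime, $p-1$ is even, and writing $p-1=2k$ gives $(p-1)^2/2 = (4k^2)/2 = 2k^2$, which is an even integer. Next I would pin down the parity of the target values $jd+1$: since $d$ is even by hypothesis, the product $jd$ is even for every $j$, and hence $jd+1$ is odd for every $j$. Comparing the two, $(p-1)^2/2$ is even while each $jd+1$ is odd, so an equality $(p-1)^2/2 = jd+1$ is impossible. This verifies the second hypothesis of Theorem \ref{pgonsdeformed}, and the corollary follows.

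I do not expect any genuine obstacle here: the argument is a one-line parity observation, which is precisely why the statement is flagged as a \emph{direct consequence} of Theorem \ref{pgonsdeformed} rather than as something requiring new topological input. The only point to be slightly careful about is confirming that $(p-1)^2/2$ is genuinely an integer and is even (not merely that $(p-1)^2$ is even), which is guaranteed by $p-1$ being even so that $(p-1)^2$ is in fact divisible by $4$.
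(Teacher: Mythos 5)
Your proof is correct and matches the paper's intent exactly: the paper states Corollary \ref{corpgons} as a ``direct consequence'' of Theorem \ref{pgonsdeformed}, and the implicit verification is precisely your parity argument --- $(p-1)^2/2$ is even since $p-1$ is even, while $jd+1$ is odd since $d$ is even, so the arithmetic hypothesis of the theorem holds automatically.
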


\begin{example}
Corollary \ref{corpgons} implies that $(2,5)$ is an admissible pair. In particular, taking the Euclidean metric $d$ of $\mathbb{R}^3$ and a continuous injective map $f:S^2\to \mathbb{R}^3$, there are 5 distinct points $y_0,y_1,y_2,y_3,y_4$ in the image of $f$ such that 
\[d(y_0,y_1)=d(y_1,y_2)=d(y_2,y_3)=d(y_3,y_4)=d(y_4,y_0)\]
and 
\[d(y_0,y_2)=d(y_2,y_4)=d(y_4,y_1)=d(y_1,y_3)=d(y_3,y_0).\]
\end{example}

\begin{example}
The triplet $(3,5)$ is admissible. In fact we have that $(p-1)^2/2=8$ and for $j=1,2,3,4$; $jd+1$ gives the values $4,7,10,13$.\\
Thus, for any continuous and injective map $f:S^3\to \mathbb{R}^4$, two metrics $\rho_1$ and $\rho_2$ that induce the usual topology of $\mathbb{R}^{4}$, there are 5 distinct points in the image of $f$, $y_0=f(x_0), y_1=f(x_1), y_2=f(x_2), y_3=f(x_3), y_4=f(x_4)$, such that
\[\rho_1(y_0, y_1) =\rho_1(y_1,y_2)=\rho_1(y_2,y_3)=\rho_1(y_3,y_4)=\rho_1(y_4,y_0),\]
\[\rho_2(y_0, y_2) =\rho_2(y_2,y_4)=\rho_2(y_4,y_1)=\rho_2(y_1,y_3)=\rho_2(y_3,y_1).\]

\end{example}

\small

\end{document}